\numberwithin{equation}{section} %riparte da zero ogni sezione
\newcounter{cont}[section] 
\newtheorem{thm}[cont]{Theorem}
\newtheorem{prop}[cont]{Proposition}
\newtheorem{lem}[cont]{Lemma}
\theoremstyle{definition}
\newtheorem{defn}[cont]{Definition}
 \theoremstyle{remark}
 \newtheorem{rem}[cont]{Remark}
\newcommand{\R}{\mathbb{R}}
\newcommand{\e}{\varepsilon}
\newcommand{\N}{\mathbb{N}}
\begin{document}

\title[On a generalized Cahn--Hilliard model with $p$-Laplacian]{On a generalized Cahn--Hilliard model with $p$-Laplacian}

\author[R. Folino]{Raffaele Folino}

\address[R. Folino]{Departamento de Matem\'aticas y Mec\'anica\\Instituto de 
Investigaciones en Matem\'aticas Aplicadas y en Sistemas\\Universidad Nacional Aut\'onoma de 
M\'exico\\Circuito Escolar s/n, Ciudad Universitaria C.P. 04510 Cd. Mx. (Mexico)}

\email{folino@mym.iimas.unam.mx}

\author[L. F. L\'{o}pez R\'{\i}os]{Luis F. L\'{o}pez R\'{\i}os}
\address[L. F. L\'{o}pez R\'{\i}os]{Departamento de Matem\'aticas y Mec\'anica\\Instituto de Investigaciones en Matem\'aticas Aplicadas y 
en Sistemas\\Universidad Nacional Aut\'{o}noma de M\'{e}xico\\Circuito Escolar s/n, Ciudad Universitaria, C.P. 04510\\Cd. de M\'{e}xico (Mexico)}
\email{luis.lopez@iimas.unam.mx}

\author[M. Strani]{Marta Strani}

\address[M. Strani]{Dipartimento di Scienze Molecolari e Nanosistemi\\
Universit\`a Ca' Foscari Venezia Mestre\\
Campus Scientifico\\Via Torino 155, 30170 Venezia Mestre (Italy)}

\email{marta.strani@unive.it}

\keywords{$p$-Laplacian; Cahn--Hilliard equations; transition layer structure; metastability; energy estimates}

\subjclass[2010]{}

\maketitle

%%%%%%\nocite{*}

\begin{abstract} 
A generalized Cahn--Hilliard model in a bounded interval of the real line with no-flux boundary conditions is considered.
The label ``generalized'' refers to the fact that we consider a concentration dependent mobility,
the $p$-Laplace operator with $p>1$ and a double well potential of the form $F(u)=\frac{1}{2\theta}|1-u^2|^\theta$, with $\theta>1$;
these terms replace, respectively, the constant mobility, the linear Laplace operator and the $C^2$ potential satisfying $F''(\pm1)>0$,
which are typical of the standard Cahn--Hilliard model.
After investigating the associated stationary problem and highlighting the differences with the standard results, 
we focus the attention on the long time dynamics of solutions when $\theta\geq p>1$.
In the \emph{critical} case $\theta=p>1$, we prove \emph{exponentially slow motion} of profiles with a transition layer structure,
thus extending the well know results of the standard model, where $\theta=p=2$;
conversely, in the \emph{supercritical} case $\theta>p>1$, we prove \emph{algebraic slow motion} of layered profiles.
\end{abstract}

%\tableofcontents

\section{Introduction}\label{sec:intro}
\subsection{Derivation of the model and motivations}
The celebrated \emph{Cahn-Hilliard equation} in the one-dimensional case reads as
\begin{equation}\label{eq:Ca-Hi}
	u_t=(-\e^2 u_{xx}+F'(u))_{xx},  \qquad \qquad x\in(a,b), \;\, t>0,
\end{equation} 
where $\e>0$ is a small coefficient and $F:\R\to\R$ is a double well potential with wells of equal depth, usually given by 
\begin{equation}\label{eq:classicF}
	F(u)=\frac14|1-u^2|^2.
\end{equation}
This model was originally introduced in \cite{Cahn,Cahn-Hilliard} to model phase separation in a binary system at a fixed temperature and
with constant total density, where $u$ stands for the concentration of one of the two components.
Generally, equation \eqref{eq:Ca-Hi} is considered with homogeneous Neumann boundary conditions
\begin{equation}\label{eq:Neu}
	u_x(a,t)=u_x(b,t)=u_{xxx}(a,t)=u_{xxx}(b,t)=0, \qquad \qquad t>0,
\end{equation}
which are physically relevant as they guarantee that the total mass of the solution is conserved. 
It is well-known that the model \eqref{eq:Ca-Hi}-\eqref{eq:Neu} can be derived as the gradient flow in the zero-mean subspace of the dual of $H^1(a,b)$
of the Ginzburg-Landau energy functional \cite{Fife}
\begin{equation}\label{GL}
	E[u]=\int_a^b \left[ \frac{\e^2}{2} u_x^2+ F(u) \right] \, dx,
\end{equation}
and that the only {\it stable} stationary solutions to \eqref{eq:Ca-Hi}-\eqref{eq:Neu} are minimizers of  the energy $E[u]$ \cite{Zheng}. 
Therefore, solutions to \eqref{eq:Ca-Hi}-\eqref{eq:Neu} converge, as $t \to \infty$, to a limit which has at most a single transition inside the interval $[a,b]$, see \cite{CarrGurtSlem}.
However, in the pioneering works \cite{AlikBateFusc91,Bates-Xun1,Bates-Xun2,Bron-Hilh}, it has been proved that if the initial profile has an $N$-transition layer structure, 
oscillating between the two minimal points $\pm1$ of $F$, then the solution maintains these $N$ transitions for a very long time, 
i.e. a time $T_\e=\mathcal{O}\left(\exp(c/\e)\right)$, as $\e\to0^+$.
In particular, the positive constant $c$ does not depend on $\e$, but only on $F''(\pm1)>0$ and on the distance between the layers.
Hence, we have an example of {\bf metastable dynamics}.

The main goal of this paper is to investigate the metastable properties of the solutions to the following more general version of \eqref{eq:Ca-Hi}, 
named the \emph{generalized Cahn--Hilliard equation} 
\begin{equation}\label{eq:CH-model}
	u_t=\left[D(u)\left(-\varepsilon^p(|u_x|^{p-2}u_x)_x+F'(u)\right)_x\right]_x,
\end{equation}
where $D:\R\to\R^+$ is a strictly positive function, $p>1$ and the function $F:\R\to\R$ is a double well potential with wells of equal depth in $u=\pm1$,
which generalizes the function defined in \eqref{eq:classicF}:
\begin{equation}\label{eq:F}
	F(u)=\frac{1}{2\theta}|1-u^2|^{\theta}, \qquad \theta>1.
\end{equation}
As a consequence, the last term appearing in \eqref{eq:CH-model} is given by
\begin{equation*}
	F'(u)=-u(1-u^2)|1-u^2|^{\theta-2}, \qquad \theta>1.
\end{equation*}
We call \eqref{eq:CH-model} \emph{generalized Cahn--Hilliard equation} because the classic Cahn--Hilliard equation \eqref{eq:Ca-Hi}, with $F$ defined in \eqref{eq:classicF}, 
can be obtained from \eqref{eq:CH-model} by choosing $D\equiv1$, $p=2$ and $\theta=2$ in \eqref{eq:F}.
On the other hand, equation \eqref{eq:CH-model} is a particular case of an even more general Cahn--Hilliard model introduced by Gurtin in \cite{Gurtin}, that in the one-dimensional case reads as
\begin{equation}\label{eq:Gurtin-model}
	u_t=\left\{D\left[-[\partial_v \hat{\Psi}(u,u_x)]_x+\partial_u\hat{\Psi}(u,u_x)-\gamma\right]_x\right\}_x+m,
\end{equation}
where $D$ is a non constant \emph{mobility} (which may depends on $u$ and its derivatives),  $\hat{\Psi}:\R^2\to\R$ is the so-called \emph{free energy}, $\gamma$ is an \emph{external microforce} and $m$ is an \emph{external mass supply}, for further details see \cite{Gurtin}.
In particular, the standard Cahn--Hilliard equation \eqref{eq:Ca-Hi} corresponds to \eqref{eq:Gurtin-model}, with the choices $D\equiv1$, $\gamma\equiv m\equiv0$ and the free energy
$$\hat{\Psi}(u,v):=\frac{\e^2}{2}v^2+F(u)=\frac{\e^2}{2}v^2+\frac{1}{4}|1-u^2|^2.$$
In the model \eqref{eq:CH-model} studied in this article,
$D$ is a concentration dependent mobility (cfr. \cite{Ca-El-NC} and references therein), $\gamma=m=0$ as in the standard case and the free energy is given by
\begin{equation}\label{eq:free-energy}
	\hat{\Psi}(u,v):=\frac{\e^p}{p}|v|^p+F(u)=\frac{\e^p}{p}|v|^p+\frac{1}{2\theta}|1-u^2|^{\theta},\qquad p,\theta>1.
\end{equation}
Notice that the free energy in the standard case corresponds to the particular choice $p=\theta=2$ in \eqref{eq:free-energy}.
Therefore, the model \eqref{eq:CH-model} generalizes the classical one \eqref{eq:Ca-Hi} for three reasons:
\begin{enumerate}
\item First, we consider a concentration dependent mobility instead of the constant one. 
Actually, it is worth to mention that a concentration dependent mobility appears in the original derivation of the Cahn--Hilliard model \cite{Cahn,Cahn-Hilliard}.
Particularly, in the physics literature, there exist one-dimensional, phase-transitional models with concentration dependent, 
strictly positive diffusivities such as the experimental exponential diffusion function for metal alloys (cf. Wagner \cite{Wagner}) 
and the Mullins diffusion model for thermal grooving, $D(u)=D_0/(1+u^2)$, for $D_0>0$, see \cite{Broad,Mullins}. 
\item Second, we consider the $p$-Laplace operator instead of the classic linear diffusion. 
Historically, the $p$-Laplacian first appeared from a power law alternative to Darcy's phenomenological law to describe fluid flow through porous media
(see, for instance, the recent review paper \cite{Ben et al} and the references therein). 
Since then, the $p$-Laplacian has established itself as a fundamental quasilinear elliptic operator and has been intensely studied in the last fifty years.
Up to our knowledge, the effects of the $p$-Laplacian in the Cahn--Hilliard model \eqref{eq:Ca-Hi} has not been  studied in the context of long time-behavior
or metastable dynamics of solutions; the only papers concerning the Cahn--Hilliard equation with $p$-Laplacian are focused on stationary solutions \cite{Dr-Ma-Ta,Takac}.

\item Finally, we consider the more general double well potential  \eqref{eq:F}, which is only $C^1(\R)$ if $\theta\in(1,2)$, and satisfies $F''(\pm1)=0$, if $\theta>2$.
When considering the competition between a double well potential as in \eqref{eq:F} and the $p$-Laplace operator, 
the case $\theta>p$ is known as \emph{supercritical} or \emph{degenerate} case, see \cite{DCDS} and references therein. 
In contrast, the case $\theta=p>1$ ($\theta\in(1,p)$) is called \emph{critical} (\emph{subcritical}).
\end{enumerate}

In order to briefly describe the derivation of \eqref{eq:CH-model}, we recall the one-dimensional continuity equation for the concentration $u$:
\begin{equation}\label{eq:cont}
		u_t+J_x=0,
\end{equation}
where $J$ is its flux. 
In the case of the standard Cahn--Hilliard equation \eqref{eq:Ca-Hi}, the flux $J$ is related to the chemical potential $\mu$ (see \cite{Gurtin}) according to the law
\begin{equation}\label{eq:flux}
	J=-D\mu_x, \qquad \mbox{ where } \qquad \mu=-\e^2u_{xx}+F'(u), \qquad \mbox{ and } \qquad D>0.
\end{equation}
By substituting \eqref{eq:flux} with $D\equiv1$ in \eqref{eq:cont}, we obtain \eqref{eq:Ca-Hi}.
In this paper, we consider a more general version of the equation \eqref{eq:flux} for the flux, given by
\begin{equation}\label{eq:flux2}
	J=-D(u)(-\e^p(|u_x|^{p-2}u_{x})_x+F'(u))_x.
\end{equation}
Notice that \eqref{eq:flux} can be obtained by \eqref{eq:flux2} by choosing $D\equiv1$ and $p=2$.
By combining the continuity equation \eqref{eq:cont} and the equation for the flux \eqref{eq:flux2}, we obtain \eqref{eq:CH-model}.
In the rest of the paper, we consider equation \eqref{eq:CH-model} with initial datum
\begin{equation}\label{eq:initial}
	u(x,0)=u_0(x), \qquad \qquad x\in[a,b],
\end{equation}
and, similarly to the classical case \eqref{eq:Ca-Hi}, we impose that the flux $J$ vanishes at the boundary points $a,b$.
Since in the case of \eqref{eq:CH-model} the flux is given by \eqref{eq:flux2} and $D$ is strictly positive, we consider the homogeneous  boundary conditions
\begin{equation}\label{eq:Neu-p}
	u_{x}=(-\e^p(|u_x|^{p-2}u_{x})_x+F'(u))_x=0, \quad \mbox{ at } x=a,b, \quad \mbox{ for } \,t\geq0.
\end{equation}
As we already mentioned, the boundary conditions \eqref{eq:Neu-p} guarantee that the total mass of the solution is preserved in time: indeed, by integrating the continuity equation \eqref{eq:cont} and using $J(a,t)=J(b,t)=0$, for any $t\geq0$, we deduce $\int_{a}^{b}u_t\,dx=0$, for any $t\geq0$.
Notice that if $F\in C^2(\R)$ (for instance, if $\theta\geq2$ in \eqref{eq:F}), then the boundary conditions \eqref{eq:Neu-p} are equivalent to 
\begin{equation*}
	u_{x}=(|u_x|^{p-2}u_{x})_{xx}=0, \quad \mbox{ at } x=a,b, \quad \mbox{ for } \,t\geq0;
\end{equation*}
thus, if $F\in C^2(\R)$ and $p=2$,  \eqref{eq:Neu-p} reduces to \eqref{eq:Neu}.

\subsection{Presentation of the main results}\label{sec:intro-main}
The Cahn--Hilliard equation \eqref{eq:Ca-Hi} is closely related to the Allen--Cahn equation, which is another model used to describe
phase transitions and in the one-dimensional case reads as
\begin{equation}\label{eq:Al-Ca}
	u_t=\e^2u_{xx}-F'(u),
\end{equation}
where $\e>0$ is the diffusion coefficient and the potential $F$ is as before. 
In particular, equation \eqref{eq:Al-Ca} can be seen as the gradient flow of the Ginzburg--Landau energy functional \eqref{GL} in $L^2(a,b)$; 
as a consequence, the solutions to \eqref{eq:Al-Ca} do not conserve mass.
The aforementioned metastable dynamics of the solutions to \eqref{eq:Al-Ca}  was firstly investigated   in the celebrated articles \cite{Bron-Kohn,Carr-Pego,Fusco-Hale},
where the authors proposed two different approaches to rigorously studied the slow motion of the solutions.
Subsequently, the same approaches have been applied to many different evolutions PDE, including the Cahn--Hilliard equation \eqref{eq:Ca-Hi}:
being impossible to recall all the contributions, we only mention a very abridged list.
In addition to the already mentioned papers on  metastability for  Cahn--Hilliard models \cite{AlikBateFusc91,Bates-Xun1,Bates-Xun2,Bron-Hilh},
we recall the fundamental work \cite{Grant}, where the author considers the vectorial version of \eqref{eq:Ca-Hi}, known as \emph{Cahn--Morral system}.
More recently, metastable dynamics has been studied for hyperbolic versions of \eqref{eq:Ca-Hi} in \cite{MMAS19,JDDE2021}
and for reaction diffusion equations involving the $p$-Laplace operator in \cite{DCDS}, 
to which we refer the reader in search of a more detailed list of PDEs sharing the phenomenon of metastability.

Inspired by the results contained in \cite{DCDS}, where the reaction-diffusion model 
\begin{equation}\label{eq:Al-Ca-p}
	u_t=\e^p(|u_x|^{p-2}u_x)_x-F'(u),
\end{equation}
with $F$ given by \eqref{eq:F}, is considered and where it is rigorously proved that the evolution of the solutions strongly depend on the interplay between
the parameter $p>1$ and the power $\theta>1$ appearing in the definition \eqref{eq:F} of $F$, we aim to extend such results to the model \eqref{eq:CH-model}.
In particular, the main results of this paper can be sketched as follows. 
To start with, we consider the stationary problem associated to \eqref{eq:CH-model}-\eqref{eq:Neu-p} and, particularly, we focus on two types of steady states:
\begin{itemize}
\item The first ones already appeared in \cite{DCDS} and they exist only in the subcritical case $\theta\in(1,p)$.
We will see that for any natural number $N\geq1$ and any locations $a<h_1<h_2<\dots<h_N<b$, we can choose $\e_0>0$ small enough so that for any $\e\in(0,\e_0)$,
there exist two steady states of \eqref{eq:CH-model}-\eqref{eq:Neu-p} that attain both the values $\pm1$ with exactly $N$ zeroes \emph{arbitrarily} located at $h_1,\dots,h_N$.
\item The second ones are peculiar of the model \eqref{eq:CH-model} with $p>2$,
as they are neither solutions of the generalized Allen--Cahn model \eqref{eq:Al-Ca-p} nor of the standard Cahn--Hilliard model \eqref{eq:Ca-Hi}.
These steady states can have an arbitrary number of zeroes as before, but their location is not arbitrary 
since they consist of a chain of \emph{pulse} solutions suitably glued together.
\end{itemize}

After studying the stationary problem and proving that there exist steady states with an arbitrary number of transitions between $\pm1$
located at random points in $(a,b)$ only in the subcritical case $\theta\in(1,p)$, 
we thus focus the attention on the case $\theta\geq p>1$; 
here, since the steady states have transition points that are not randomly located,
there exist solutions which are neither stationary nor they are close to them, but still evolve very slowly in time.
To be more specific:

\begin{itemize}
\item In the critical case $\theta=p>1$, we extend to the generalized Cahn--Hilliard equation \eqref{eq:CH-model}
the classical results on the exponentially slow motion of the solutions to \eqref{eq:Ca-Hi}.
Precisely, we prove that there exists solutions with $N$ transitions between $\pm1$ which maintain such a layered structure
for times of $\mathcal{O}\left(e^{c/\e}\right)$, with $c>0$ (the so called \emph{metastable states}).
\item In the supercritical case $1<p<\theta$, we prove that layered structures still evolve slowly in time, but only with an algebraically small speed,
that is they maintain their unstable configurations for times of $\mathcal{O}\left(\e^{-k}\right)$, with $k>0$.
It is worth noticing that these results are new also for the classical Cahn--Hilliard equation \eqref{eq:Ca-Hi} ($\theta>p=2$).
\end{itemize}

In order to prove the slow motion results sketched above, 
we mean to adapt a strategy firstly introduced by Bronsard et al. in \cite{Bron-Hilh, Bron-Kohn},
and then improved by Grant in \cite{Grant}, where the author is able to prove exponentially slow motion of solutions to the Cahn--Morral system.
The key point of such a strategy hinges on the use of the \emph{normalized energy functional}
\begin{equation}\label{eq:energy-p=2}
	\int_a^b\left[\frac{\e|u_x|^2}{2}+\frac{F(u)}\e\right]\,dx,
\end{equation}
obtained multiplying by $\e^{-1}$ the Ginzburg--Landau functional \eqref{GL},
this being the reason why the strategy proposed in \cite{Bron-Hilh, Bron-Kohn} is known as \emph{energy approach}.
After its introduction, the quite elementary but powerful energy approach has been applied to study metastable dynamics for many different evolution PDEs:
for an abridged list we refer the reader to the aforementioned articles \cite{MMAS19,DCDS}, and references therein. 
To adapt the energy approach to the model \eqref{eq:CH-model}-\eqref{eq:Neu-p}, we shall use the functional
\begin{equation}\label{eq:energy}
	E_\e[u]:=\frac{1}{\e}\int_a^b\hat\Psi(u,u_x)\,dx=\int_a^b\left[\frac{\e^{p-1}|u_x|^p}{p}+\frac{F(u)}\e\right]\,dx,
\end{equation}
where $\hat\Psi$ is the free-energy introduced in \eqref{eq:free-energy}. 
As we will see in Section \ref{sec:slow}, the energy \eqref{eq:energy} plays the same crucial role played by \eqref{eq:energy-p=2} for \eqref{eq:Ca-Hi}
and it allows us to prove either the exponentially or the algebraic slow motion of solutions to \eqref{eq:CH-model}-\eqref{eq:Neu-p}.

\subsection*{Plan of the paper}
The rest of the paper is structured as follows.
In Section \ref{sec:steady} we study the stationary problem associated to \eqref{eq:CH-model}-\eqref{eq:Neu-p},
with the aim of showing that steady states with an arbitrary number of transitions located at arbitrary positions in $(a,b)$ can exist only in the subcritical case $\theta\in(1,p)$.
Moreover, in Proposition \ref{prop:2trans} we prove existence of \emph{pulse} solutions in the case $p>2$;
these solutions can be suitably glued together to obtain solutions with a generic number of transitions ($N\geq3$),
whose positions must satisfy a certain property, for details see Section \ref{rem:2trans}.  
Section \ref{sec:slow} is devoted to the study of the slow evolution of solutions with a layered structure.
In Theorem \ref{thm:main} we consider the case $\theta=p$, and we prove persistence of metastable states for an exponentially long time;
the algebraic slow motion in the case $\theta>p$ is proved in Theorem \ref{thm:main2}.
Finally, in Section \ref{LD} we provide an estimate on the velocity of the transition points,
showing that they move with either exponentially or algebraically small speed if $\theta=p$ or $\theta>p$, respectively (cfr. Theorem \ref{thm:main3}).

\section{Steady states}\label{sec:steady}
Studying the stationary problem associated to the model \eqref{eq:CH-model}-\eqref{eq:Neu-p} is an interesting and difficult topic,
just think that there is a vast literature devoted to the particular case $p=2$ and a non-degenerate double well potential as in \eqref{eq:F} with $\theta=2$, 
corresponding to the standard Cahn--Hilliard equation \eqref{eq:Ca-Hi}.
An abridged list of references includes \cite{AlikBateFusc91,Bates-Xun2,CarrGurtSlem,Grin-Nov,KosMorYot,Nov-Pel,Zheng}.
The aim of this section is to understand whether a function with an arbitrary number $N\in\mathbb N$ of transitions,
located at arbitrary positions $a<h_1<h_2<\dots<h_N<b$ is a steady state of \eqref{eq:CH-model}-\eqref{eq:Neu-p}.
This study is preliminary to the main results of this paper, which are contained in the following sections, 
when we prove slow motion of solutions with a transition layer structure, that are neither stationary nor they are close to them.

From \eqref{eq:CH-model}, it follows that stationary solutions satisfy
\begin{equation*}
	\left[D(u)\left(-\varepsilon^p(|u_x|^{p-2}u_x)_x+F'(u)\right)_x\right]_x=0, \qquad \qquad x\in(a,b),
\end{equation*}
and, as a consequence of the boundary conditions \eqref{eq:Neu-p}, since $D$ is strictly positive, 
all the stationary solutions to \eqref{eq:CH-model}-\eqref{eq:Neu-p} satisfy the boundary value problem (BVP)
\begin{equation}\label{eq:BVP}
	\begin{cases}
		-\varepsilon^p(|u_x|^{p-2}u_x)_x+F'(u)=\beta\in\R, \qquad \qquad x\in(a,b),\\
		u_x(a)=u_x(b)=0.
	\end{cases}
\end{equation}
Hence, for any fixed $\beta\in\R$, a solution to \eqref{eq:BVP} gives a steady state of \eqref{eq:CH-model}-\eqref{eq:Neu-p};
for instance, notice that any real constant provides a steady states of such a problem.
% \eqref{eq:CH-model}-\eqref{eq:Neu-p}.
In the particular case $\beta=0$, we obtain the steady states of the reaction-diffusion model \eqref{eq:Al-Ca-p}, with homogeneous Neumann boundary conditions, 
that has been already studied in previous works, see \cite{Dr-Ma-Ta,Dra-Rob,Dra-Rob2,DCDS} among others. 
For completeness, we briefly recall the results contained in the latter articles.
If $F$ is given by \eqref{eq:F} with $\theta\geq p>1$, the set of all solutions is qualitatively the same as the case $\theta=p=2$,
that is the classic boundary value problem with linear diffusion and a double well potential with wells of equal depth, namely
$$\varepsilon^2u_{xx}+u-u^3=0, \qquad x\in(a,b), \qquad \qquad u_x(a)=u_x(b)=0.$$
It is well known that the only solutions to such boundary value problem are the constant solutions $u=-1,0,1$, and non constant solutions that can be extended to
{\bf periodic functions} on $\R$, which always satisfy $-1<u(x)<1$, for any $x\in(a,b)$ (for further details see \cite{Carr-Pego}). 
Such characterization is preserved also if one considers a $p$-Laplace operator and a potential $F$ as in \eqref{eq:F}, but only in the case $\theta\geq p>1$ (see \cite{DCDS}).
In contrast, if $1<\theta<p$, the structure of the set of stationary solutions is much richer, and there exist steady states that attain both the values $\pm1$ with an arbitrary number of transitions located at {\bf arbitrary positions} in $(a,b)$ (and therefore they are not necessarily periodic).
To be more precise, we recall the following result contained in \cite{DCDS}.

\begin{prop}\label{prop:teta<p}
Let us consider the BVP \eqref{eq:BVP} with $\beta=0$, $F$ given by \eqref{eq:F} and $1<\theta<p$. 
Fix $N\in\mathbb{N}$ and $a<h_1<h_2<\dots<h_N<b$. 
There exists $\e_0>0$ such that if $\e\in(0,\e_0)$, then there exist two solutions $\pm\varphi^\e_N$ 
to \eqref{eq:BVP} satisfying  $|\varphi^\e_N| \leq 1$ and with exactly $N$ zeros in $h_1,\dots,h_N$.

Moreover, for any $\e\in(0,\e_0)$, we have
\begin{equation}\label{eq:energy-teta<p}
	E_\e[\pm\varphi^\e_N]=Nc_p, \qquad \qquad c_p:=\left(\frac{p}{p-1}\right)^{\frac{p-1}{p}}\int_{-1}^{+1} {F(s)}^\frac{p-1}{p}\,ds,
\end{equation}
where the energy $E_\e$ is defined in \eqref{eq:energy}. 
\end{prop}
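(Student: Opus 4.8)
The plan is to reduce the second-order problem \eqref{eq:BVP} with $\beta=0$ to a first-order autonomous ODE, exhibit a single compactly supported transition profile, and then glue $N$ copies of it together at the prescribed points. Setting $\beta=0$ in \eqref{eq:BVP} gives $\e^p(|u_x|^{p-2}u_x)_x=F'(u)$; multiplying by $u_x$ and using the identity $(|u_x|^{p-2}u_x)_x\,u_x=\tfrac{p-1}{p}\tfrac{d}{dx}|u_x|^p$, one integrates once to obtain $\e^p\tfrac{p-1}{p}|u_x|^p=F(u)+C$. Imposing that the profile attain a minimum of $F$, i.e.\ $u=\pm1$ with $u_x=0$, forces $C=0$, so that an increasing transition solves the separable equation $u_x=\bigl(\tfrac{p}{p-1}\bigr)^{1/p}\e^{-1}F(u)^{1/p}$.

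First I would establish that this transition has \emph{finite width}, which is the crucial point and the reason the subcritical hypothesis $1<\theta<p$ is needed. Integrating the separable equation, the spatial length required to go from $u=-1$ to $u=+1$ equals $\ell_\e:=\e\bigl(\tfrac{p-1}{p}\bigr)^{1/p}\int_{-1}^{+1}F(s)^{-1/p}\,ds$. Near $s=\mp1$ one has $F(s)\sim c\,|1\pm s|^{\theta}$, hence $F(s)^{-1/p}\sim c\,|1\pm s|^{-\theta/p}$, and this integral converges \emph{precisely} when $\theta/p<1$, i.e.\ in the subcritical regime. Thus, in contrast with the classical case $\theta=p=2$ where transitions only approach $\pm1$ exponentially, here each transition reaches the values $\pm1$ at a finite distance $\ell_\e=\mathcal{O}(\e)$; moreover $u_x=\bigl(\tfrac{p}{p-1}\bigr)^{1/p}\e^{-1}F(u)^{1/p}\to0$ as $u\to\pm1$, so the profile can be continued by the constants $\pm1$ while remaining of class $C^1$.

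With the single transition at hand, I would build $\varphi_N^\e$ by placing, for $i=1,\dots,N$, a copy of the standing wave centred at $h_i$ (alternately increasing and decreasing so that the function oscillates between $-1$ and $+1$ and changes sign exactly at each $h_i$), and filling the remaining subintervals, together with collars of the endpoints $a,b$, with the constant values $\pm1$. Since each transition occupies an interval of length $\ell_\e=\mathcal{O}(\e)$ around its centre, choosing $\e_0>0$ small enough guarantees that the $N$ transition regions are pairwise disjoint and contained in $(a,b)$; this is where the smallness of $\e$ enters. On every constant piece the equation holds trivially and $u_x=0$, so the boundary conditions $u_x(a)=u_x(b)=0$ are met, while at each gluing point the flux $|u_x|^{p-2}u_x$ is continuous (it vanishes there), so $\varphi_N^\e$ is a genuine $C^1$ weak solution of \eqref{eq:BVP} with $\beta=0$, with exactly $N$ zeros at $h_1,\dots,h_N$ and $|\varphi_N^\e|\le1$. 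Since the operator and $F'$ are odd, $-\varphi_N^\e$ is the second solution.

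Finally, the energy identity \eqref{eq:energy-teta<p} follows by direct computation. The integrand of $E_\e$ in \eqref{eq:energy} vanishes on the constant pieces, so only the $N$ transitions contribute; on each of them the first integral $\e^p\tfrac{p-1}{p}|u_x|^p=F(u)$ yields $\tfrac{\e^{p-1}|u_x|^p}{p}+\tfrac{F(u)}{\e}=\tfrac{p}{p-1}\tfrac{F(u)}{\e}$, and changing variables via $dx=\e\bigl(\tfrac{p-1}{p}\bigr)^{1/p}F(u)^{-1/p}\,du$ turns the contribution of one transition into $\bigl(\tfrac{p}{p-1}\bigr)^{(p-1)/p}\int_{-1}^{+1}F(s)^{(p-1)/p}\,ds=c_p$, whence $E_\e[\pm\varphi_N^\e]=Nc_p$. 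I expect the main obstacle to be the rigorous justification of the finite-width/compact-support property together with the $C^1$ gluing: one must verify that $s\mapsto|s|^{p-2}s$ applied to the continuous $u_x$ produces a flux with no jump at the gluing points, so that the piecewise construction is an admissible weak solution of \eqref{eq:BVP} rather than merely a formal one.
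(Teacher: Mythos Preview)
Your argument is correct and is precisely the standard construction: the paper does not give a self-contained proof here but simply cites \cite[Proposition~2.5 and Proposition~3.7, Remark~3.8]{DCDS}, where the same finite-width heteroclinic (convergence of $\int_{-1}^{1}F(s)^{-1/p}\,ds$ under $\theta<p$) is built and glued, and the energy is computed via the first integral exactly as you do. Your identification of the $C^1$ gluing/weak-solution issue is apt but not an obstacle, since $u_x$ and the flux $|u_x|^{p-2}u_x$ both vanish at the matching points and $F'(\pm1)=0$, so the glued profile is in fact a classical solution.
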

\begin{proof}
For the proof of the existence see \cite[Proposition 2.5]{DCDS}, while for the proof of \eqref{eq:energy-teta<p} see \cite[Proposition 3.7 and Remark 3.8]{DCDS}.
\end{proof}

An interesting related problem is whether the steady states of Proposition \ref{prop:teta<p} are dynamically stable under small perturbations, 
inasmuch as it has been recently proved that they are unstable as variational solutions to the associated elliptic problem, see Theorem 1.5 in \cite{DPV20}. 
In other words, such critical points are not strict local minimizers because of \eqref{eq:energy-teta<p} and we imagine two possible scenarios:
either a small perturbation force the corresponding time-dependent solution of \eqref{eq:Al-Ca-p} to evolve until it reaches the global minimum of the energy \eqref{eq:energy},
that is equal to zero, or a small perturbation does not destroy the transition layer structure, which is maintained for all times $t\geq0$.
In \cite{Dra-Rob2}, a characterization of a subset of the basin of attraction for the aforementioned local minimizers is provided.

On the other hand, as it was already mentioned, if $\theta\geq p$ solutions as in Proposition \ref{prop:teta<p} can not exist
because the only non constant solutions can be extended to periodic solutions on $\R$.
For an arbitrary number $N\in\mathbb{N}$, there exist solutions taking values in $(-1,1)$ with exactly $N$ transitions,
but layer positions must repeat in a regular fashion, meaning that they can not be arbitrary chosen;
indeed, these solutions can be seen as truncations of periodic solutions on the whole real line of period $2(b-a)/N$.

We now focus on the problem \eqref{eq:BVP} for $\beta\neq0$: in order to understand the structure of its solutions, we study the equation in the whole real line,  that is
\begin{equation}\label{eq:ODE}
	\varepsilon^p(|u_x|^{p-2}u_x)_x-G_\beta'(u)=0, \qquad \mbox{in $\R$},  \qquad \quad \mbox{ where } \quad G_\beta(u):=F(u)-\beta u.
\end{equation}
Notice that $G_0=F$, so $G_\beta$ is a balanced double well potential for $\beta=0$, while $G_\beta$ is an unbalanced double well potential for $\beta\neq0$,
see Figure \ref{fig:G_b}.

\begin{figure}[ht]
\centering
\includegraphics[width=5.5cm,height=4cm]{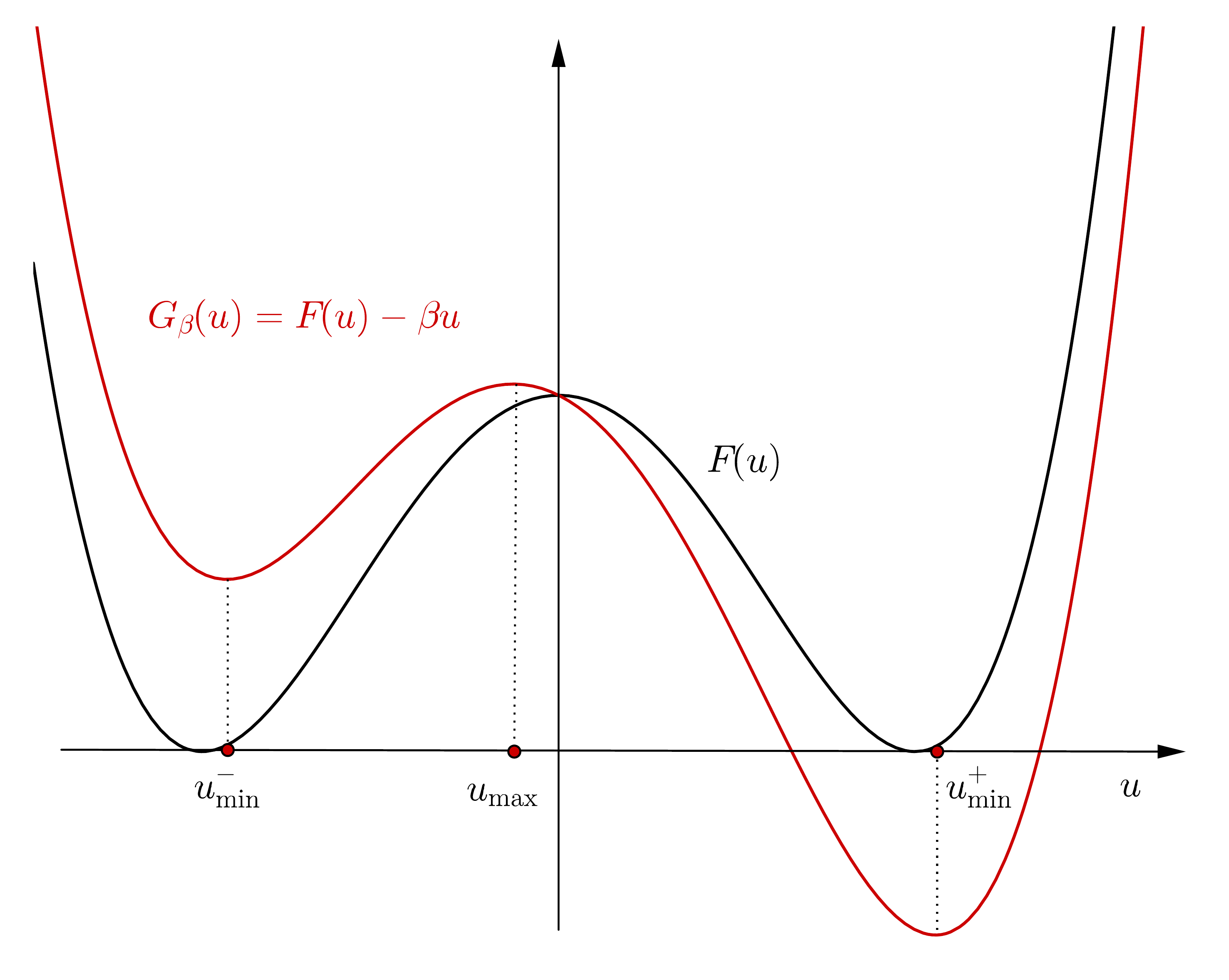}
 \qquad\quad
\includegraphics[width=5.5cm,height=4cm]{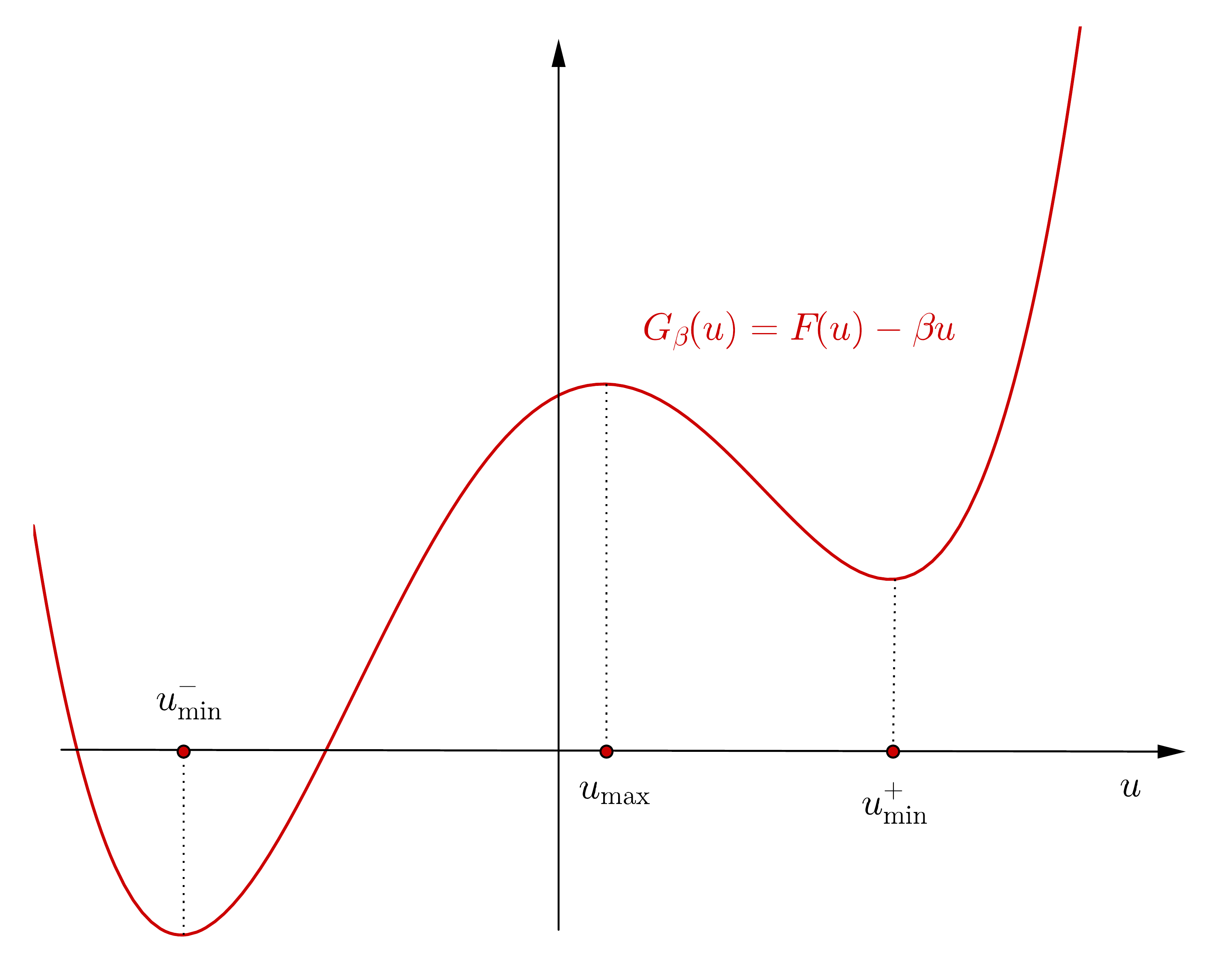}
 \hspace{3mm}
 \caption{\small{In the above pictures $\theta=2$. For comparison, on the left we plot the graph of $F(u)=|1-u^2|^2$ (black line) and $G_\beta(u)= F(u)-\beta u$, with $\beta>0$  (red line). The potential still have two minima (named here $u_{\min}^- <u_{\min}^+$), but $F(u_{\min}^- ) \neq F(u_{\min}^+)$ (unbalanced potential). We can also notice a translation of all the critical points, including the maximum $u_{\max}$. The same happens if $\beta <0$ (right plot).}} 
 \label{fig:G_b}
 \end{figure}
To be more precise, we have
$$G'_\beta(u)=F'(u)-\beta=-u(1-u^2)|1-u^2|^{\theta-2}-\beta,$$
and, for $\theta\geq2$, 
$$G''_\beta(u)=F''(u)=\left[(2\theta-1)u^2-1\right]|1-u^2|^{\theta-2}.$$
As a consequence, $G_\beta$ has exactly three critical points for any $\beta\in(F'(u_+),F'(u_-))$, where 
\begin{equation}\label{eq:u_pm}
	u_\pm:=\pm(2\theta-1)^{-1/2}, \qquad  \mbox{ and } \qquad F'(u_\pm)=\mp(2\theta-1)^{1/2-\theta}(2\theta-2)^{\theta-1}.
\end{equation}
Indeed, it is easy to check that  the equation $G'_\beta(u)=0$ (i.e. $F'(u)=\beta$) has exactly three solutions if and only if $\beta\in(F'(u_+),F'(u_-))$.
Moreover, the equation $G''_\beta(u)=0$ has exactly two solutions $u=u_\pm$, if $\theta=2$, and four solutions $u=\pm1,u_\pm$, for $\theta>2$.

Multiplying by $u_x$ the ODE \eqref{eq:ODE}, we deduce, for $x \in \R$
\begin{equation}\label{eq:first-ode}
	\frac{\e^p(p-1)}{p}|u_x|^p=G_\beta(u)-\kappa, \qquad \qquad \mbox{ with }\, \kappa\in\R.
\end{equation}

\begin{figure}[ht]
\centering
\includegraphics[width=7cm,height=4.5cm]{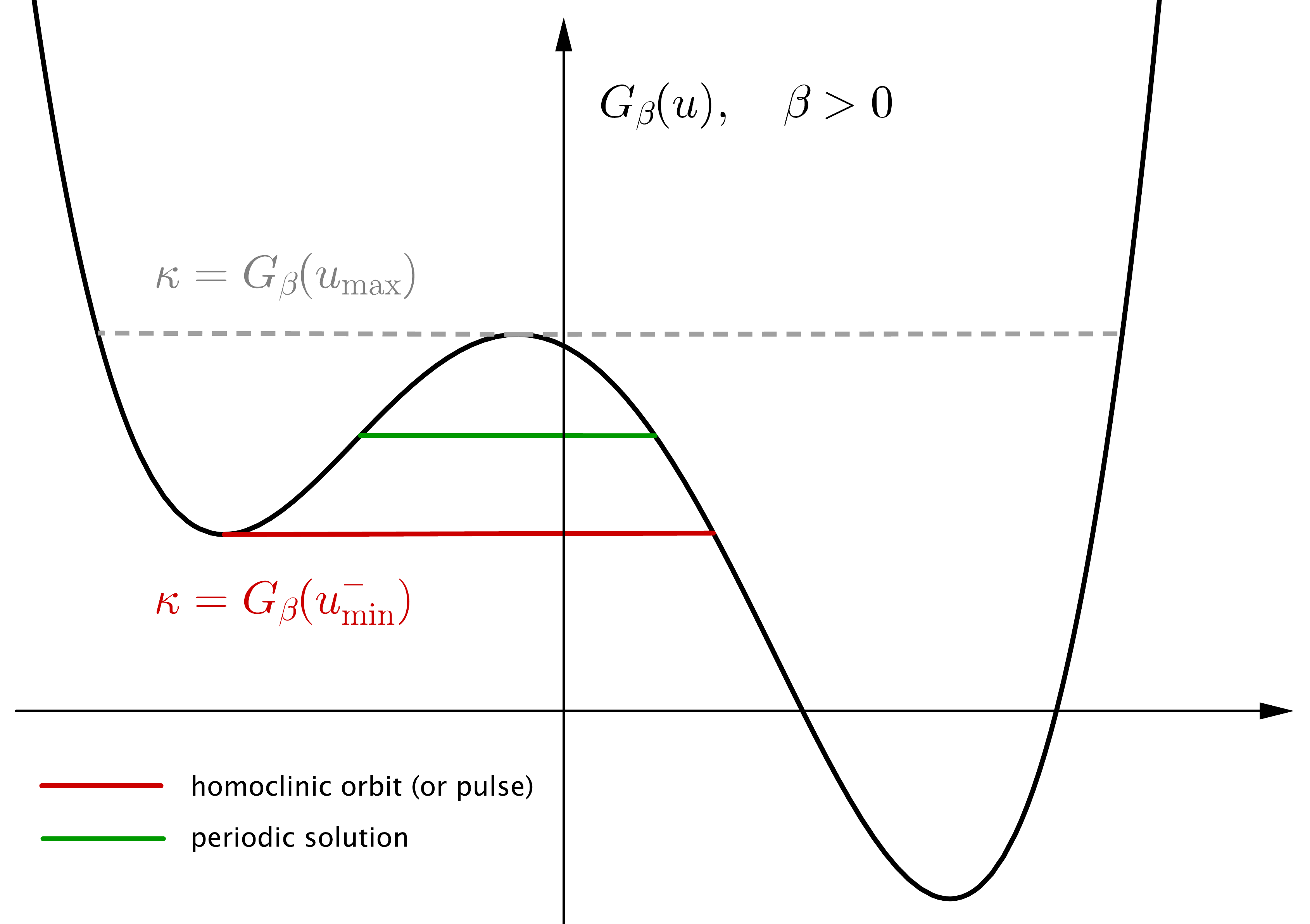}
 \hspace{3mm}
 \caption{\small{Bounded solutions to \eqref{eq:ODE} correspond to different levels of the constant $\kappa\in[G_\beta(u^-_{\min}),G_\beta(u_{\max})]$; 
 in particular, for $\kappa=G_\beta(u_{\min}^-)$ and $\beta\in(0,F'(u_-))$, we have the homoclinic orbit (or \emph{pulse}) joining the critical point $u_{\min}^-$ to itself. 
 Similarly, if $\beta<0$, a homoclinic orbit connecting $u_{\min}^+$ to itself appears. 
 In both cases the orbits touch the values $u_{\min}^{\pm}$ only for $ p>2$.
 }} 
 \label{fig:pp}
 \end{figure}
 
From the phase portrait in Figure \ref{fig:pp}, it is clear that the boundary conditions in \eqref{eq:BVP} imply that all  solutions to this problem lie on closed orbits, this being the reason why we are interested in studying solutions to \eqref{eq:first-ode} for $\kappa \!\in \!\left[G_{\beta}(u_{\min}^-),G_\beta(u_{\max})\right]$.
As a consequence, stationary solutions to \eqref{eq:CH-model}-\eqref{eq:Neu-p} correspond to appropriate choices of the parameters $\beta,\kappa$ in \eqref{eq:first-ode}; for example, when choosing $$\kappa \in \left \{ G_\beta(u_{\min}^-), G_\beta(u_{\max}), G_\beta(u_{\min}^+) \right\}, $$ we have constant steady states
and, as in the case $\beta=0$, there are non constant solutions which can be seen as a truncation of periodic solutions in the whole real line, 
corresponding to $\kappa\in\left(G_{\beta}(u_{\min}^-), G_\beta(u_{\max})\right)$, see the green line in Figure \ref{fig:pp}.
However, taking advantages of the pair $(\beta,\kappa)\neq(0,0)$, we can construct many different solutions.
As it was already mentioned, this is a very ambitious goal we do not accomplish in this paper;
anyway, for the interested reader we refer to the aforementioned articles \cite{AlikBateFusc91,Bates-Xun2,CarrGurtSlem,Grin-Nov,KosMorYot,Nov-Pel,Zheng},
where the case $p=\theta=2$ is considered in detail.
Here, we only recall that if we add a mass constraint to \eqref{eq:BVP} of the form
\begin{equation}\label{eq:mass-constraint}
 	\frac{1}{b-a}\int_a^b u(x)\,dx=m,
\end{equation}
we can assert that for any fixed $N\in\mathbb N$ and $m\in(-1,1)$, it is possible to choose $\e>0$ sufficiently small such that
there are solutions with $N$ transitions satisfying \eqref{eq:mass-constraint}.
It has to be observed again that if $N\geq2$, the location of the transition layers is {\bf not arbitrary}:
if $h^e:=(h^e_1,\dots,h^e_N)$ is the vector of layer locations, that is the stationary solution satisfies $u^e(h^e_i)=0$, 
for $i=1,\dots,N$ and $a<h^e_1<\dots<h^e_N<b$, then $u^e$ is the periodic extension of that part of $u^e$ in $[a,(h^e_2+h^e_3)/2]$
and one has $l^e_i=l^e_{i+2}$, for any $i=1,\dots,N-1$, where $l^e_i=h^e_i-h^e_{i-1}$, $h^e_0:=2a-h^e_1$ and $h^e_{N+1}:=2b-h^e_N$.
On the other hand, if $N=1$ the position of the single transition is arbitrary and monotone solutions play a crucial role, 
because profiles with more than one transition can not minimize the energy, for details see \cite{CarrGurtSlem}.

Coming back to our problem \eqref{eq:BVP} for generic $\theta$ and $p$, in the case $1<\theta<p$  Proposition \ref{prop:teta<p} ensures the existence of solutions oscillating between $\pm 1$ and with $N \in \N$ transitions that are arbitrarily located; 
this is a consequence of the fact that the heteroclinic orbit, peculiar of a balanced potential and corresponding to the choice $(\beta,\kappa)=(0,0)$, attains both the values $\pm1$.

We thus focus on the case $1<p\leq\theta$, where the heteroclinic orbit does not attain $\pm1$, but it converges asymptotically to them as $x\to\pm\infty$; on the one hand,  we expect a similar result as the case $\theta=p=2$ to hold true.
On the other hand, if $p>2$, we can construct \emph{new} solutions to {\eqref{eq:BVP}} by {\it truncating} the homoclinic {orbits} of \eqref{eq:ODE}, 
which are peculiar of this problem, since we are dealing with an unbalanced potential, see Figure \ref{fig:2trans}.

\begin{figure}[ht]
\centering
\includegraphics[width=6cm,height=4cm]{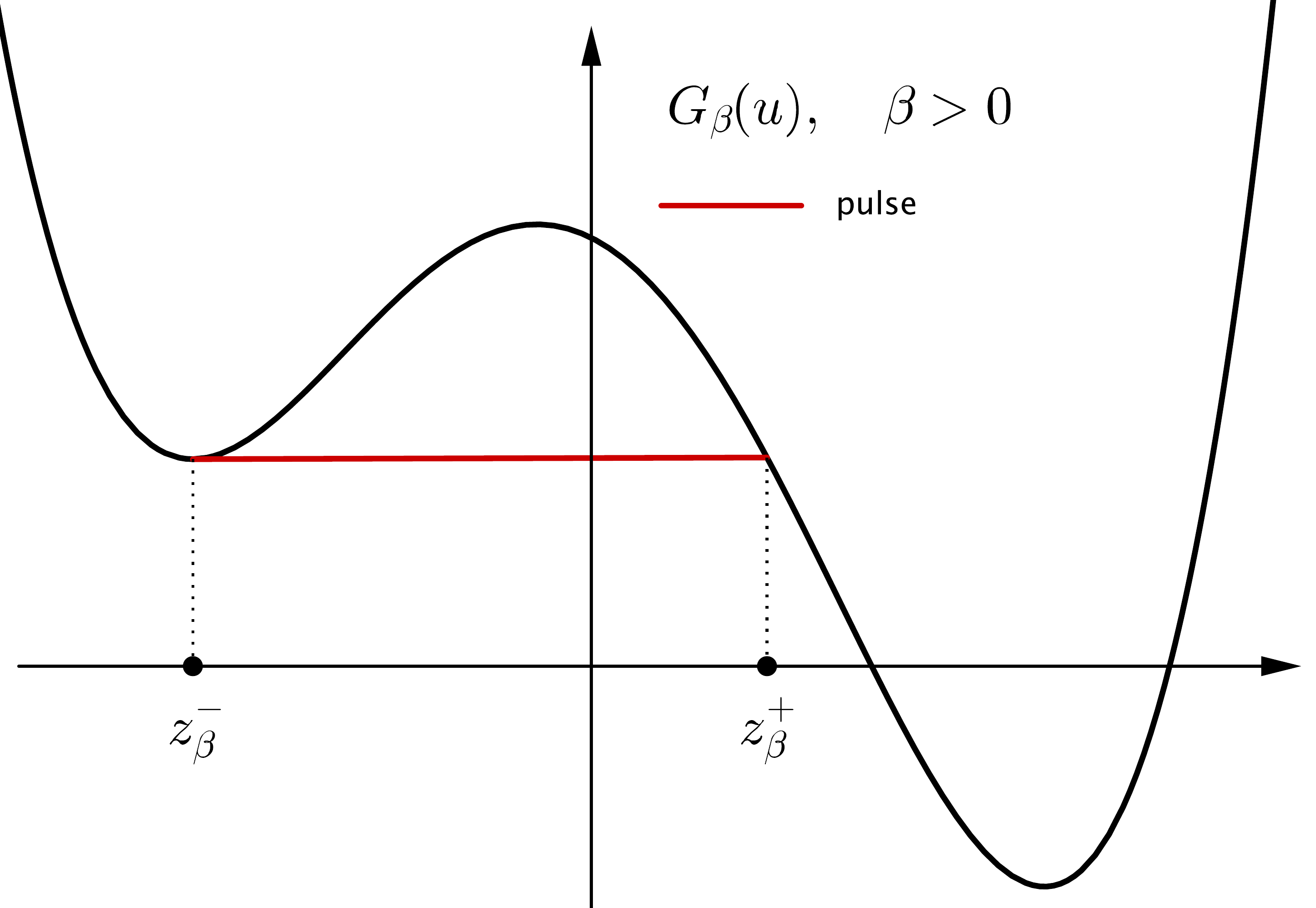}
 \hspace{3mm}
 \caption{\small{The red line corresponds to the pulse starting from $z_\beta^-$ and with a first transition point, where the solution is zero;
 once the solution becomes positive, it reaches its maximum value $z_\beta^+$ and, after that, comes back to $z_\beta^-$, experiencing a second transition.
 }} 
 \label{fig:2trans}
 \end{figure}

In the  following result we provide the existence of  stationary solutions to  \eqref{eq:ODE} with exactly two transitions; 
we stress once again that these solutions, appropriately truncated, satisfy the BVP \eqref{eq:BVP} as well.

\begin{prop}[existence of pulse solutions in the case $p>2$]\label{prop:2trans}
Let $u_\pm\in\R$ given by \eqref{eq:u_pm} and for any  {$\beta\in(0,F'(u_-))$}, set 
$$z_\beta^-:=\min_\R\left\{z\in\R : G'_\beta(z)=0\right\}.$$
If $2 < p \leq \theta$, then there exists a pulse $\psi_\beta:=\psi_\beta(x)$ satisfying \eqref{eq:ODE},
$$\psi_\beta(0)=\max_\R \psi_\beta=: z_\beta^+,$$
with $G_\beta(z_\beta^+)=G_\beta(z_\beta^-)$, and
\begin{align*}
	\psi_\beta(x)=z_\beta^-, \qquad \forall\,x\in(-\infty,-\omega_\beta]\cup[\omega_\beta,+\infty), \\
	\psi'_\beta>0, \quad \mbox{ in } \, (-\omega_\beta,0), \qquad \psi'_\beta<0, \quad \mbox{ in } \, (0,\omega_\beta),
\end{align*}
for some $\omega_\beta>0$.
Moreover, we have the following limits
\begin{equation}\label{eq:z_beta}
	\lim_{\beta\to0^+}z_\beta^\pm=\pm1.
\end{equation}
Similarly, if $\beta\in(F'(u_+),0)$, denote by 
$$z_\beta^+:=\max_\R\left\{z\in\R : G'_\beta(z)=0\right\}.$$
Then there exists a pulse $\psi_\beta:=\psi_\beta(x)$ satisfying \eqref{eq:ODE}, 
$$\psi_\beta(0)=\min_\R \psi_\beta=:z_\beta^-,$$
with $G_\beta(z_\beta^-)=G_\beta(z_\beta^+)$, and
\begin{align*}
	\psi_\beta(x)=z_\beta^+, \qquad \forall\,x\in(-\infty,-\omega_\beta]\cup[\omega_\beta,+\infty), \\
	\psi'_\beta<0, \quad \mbox{ in } \, (-\omega_\beta,0), \qquad \psi'_\beta>0, \quad \mbox{ in } \, (0,\omega_\beta),
\end{align*}
for some $\omega_\beta>0$.
\end{prop}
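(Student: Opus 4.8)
The plan is to integrate the first-order reduction \eqref{eq:first-ode} by a time-map, choosing the constant $\kappa$ so that the orbit leaves the lowest minimum $z_\beta^-$ of $G_\beta$, climbs to a turning point $z_\beta^+$ and comes back; the decisive feature is that for $p>2$ this takes place in a \emph{finite} ``time'' $\omega_\beta$, which lets us glue the orbit to the constant $z_\beta^-$ outside $[-\omega_\beta,\omega_\beta]$ and still obtain a solution of \eqref{eq:ODE} on all of $\R$. Throughout I assume $2<p\le\theta$, but the pulse construction hinges only on $p>2$.

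First I would analyze the potential. Fix $\beta\in(0,F'(u_-))$. From $G_\beta''=F''=[(2\theta-1)u^2-1]\,|1-u^2|^{\theta-2}$ and \eqref{eq:u_pm} one checks that $F'$ is strictly increasing on $(-1,u_-)$, mapping it bijectively onto $(0,F'(u_-))$; hence $z_\beta^-\in(-1,u_-)$ is the unique leftmost solution of $F'(u)=\beta$ and it is a \textbf{nondegenerate} local minimum of $G_\beta$, i.e. $G_\beta''(z_\beta^-)=F''(z_\beta^-)>0$. Thus $G_\beta$ has exactly three critical points $z_\beta^-<z_\beta^m<z_\beta^r$ (min, max, min), and since the tilt $-\beta u$ lowers the right well one has the ordering $G_\beta(z_\beta^r)<G_\beta(z_\beta^-)<G_\beta(z_\beta^m)$. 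Set $\kappa:=G_\beta(z_\beta^-)$. Because $G_\beta$ decreases from $G_\beta(z_\beta^m)>\kappa$ to $G_\beta(z_\beta^r)<\kappa$ on $(z_\beta^m,z_\beta^r)$, the level set $\{G_\beta=\kappa\}$ meets this branch at a unique point $z_\beta^+$, which is a \emph{regular} point ($G_\beta'(z_\beta^+)\neq0$) with $G_\beta(z_\beta^+)=\kappa=G_\beta(z_\beta^-)$.

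Next I would construct and glue the profile. On $[z_\beta^-,z_\beta^+]$ we have $G_\beta(s)-\kappa\ge0$ with equality only at the endpoints, so \eqref{eq:first-ode} defines the increasing branch of $\psi_\beta$ on $(-\omega_\beta,0]$ by inverting the time-map, extended evenly to $[0,\omega_\beta)$ and by the constant $z_\beta^-$ outside, where
\[
\omega_\beta:=\left(\frac{\e^p(p-1)}{p}\right)^{1/p}\int_{z_\beta^-}^{z_\beta^+}\big(G_\beta(s)-\kappa\big)^{-1/p}\,ds .
\]
By construction $\psi_\beta$ solves \eqref{eq:ODE} on $(-\omega_\beta,\omega_\beta)$, is symmetric, strictly increasing then decreasing with maximum $z_\beta^+$ at $x=0$. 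The \textbf{key step} is $\omega_\beta<\infty$: near $z_\beta^+$ the integrand behaves like $|s-z_\beta^+|^{-1/p}$ (integrable for all $p>1$), while near $z_\beta^-$ the nondegeneracy gives $G_\beta(s)-\kappa\sim\tfrac12 G_\beta''(z_\beta^-)(s-z_\beta^-)^2$, so the integrand behaves like $(s-z_\beta^-)^{-2/p}$, which is integrable \emph{iff} $2/p<1$, that is $p>2$. Since $\psi_\beta'\to0$ as $x\to\pm\omega_\beta^{\mp}$ and $G_\beta'(z_\beta^-)=0$, both the flux $|\psi_\beta'|^{p-2}\psi_\beta'$ and its $x$-derivative match those of the constant at $x=\pm\omega_\beta$; hence the glued function is a $C^1$ solution of \eqref{eq:ODE} with continuous flux and the asserted profile.

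Finally, for \eqref{eq:z_beta} I would argue that $z_\beta^-\to-1$ because $F'(z_\beta^-)=\beta\to0^+$ with $F'$ strictly increasing near $-1$ and $F'(-1)=0$, while $z_\beta^+\in(z_\beta^m,z_\beta^r)$ satisfies $F(z_\beta^+)=\kappa+\beta z_\beta^+\to0$ with $z_\beta^+$ staying in the right well, which forces $z_\beta^+\to1$ (the only zero of $F$ there, as $F(0)=\tfrac1{2\theta}\neq0$). The case $\beta\in(F'(u_+),0)$ follows at once from the evenness of $F$ via $u\mapsto-u$, $\beta\mapsto-\beta$, which sends $G_\beta$ to $G_{-\beta}$ and interchanges the two wells. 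I expect the \emph{main obstacle} to be precisely the finite-width estimate at $z_\beta^-$: verifying $F''(z_\beta^-)>0$ for every admissible $\beta$ and turning it into integrability of $(G_\beta-\kappa)^{-1/p}$, which is exactly where $p>2$ enters and distinguishes these pulses from the $\beta=0$ heteroclinic, which reaches $\pm1$ in finite $x$ only in the subcritical regime $\theta<p$; a secondary, more technical point is checking that the glued profile is a genuine solution across $x=\pm\omega_\beta$.
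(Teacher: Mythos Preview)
Your proposal is correct and follows essentially the same route as the paper: choose $\kappa=G_\beta(z_\beta^-)$ in \eqref{eq:first-ode}, define the profile implicitly via the time-map, and show $\omega_\beta<\infty$ by Taylor-expanding $G_\beta-\kappa$ at the two endpoints, obtaining the integrand $\sim(z_\beta^+-s)^{-1/p}$ near $z_\beta^+$ (integrable for $p>1$) and $\sim(s-z_\beta^-)^{-2/p}$ near $z_\beta^-$ (integrable iff $p>2$). Your treatment is in fact slightly more thorough than the paper's, since you explicitly justify the $C^1$ gluing across $x=\pm\omega_\beta$ and argue the limit $z_\beta^+\to1$, whereas the paper leaves these implicit.
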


\begin{proof}
We prove the result in the case $\beta\in(0,F'(u_-))$, being the other case very similar.
Choose $\kappa=G_\beta(z^-_\beta)$ in \eqref{eq:first-ode};
notice for later use that, by definition, $G_\beta'(z^-_\beta)=0$ and, since $z^-_\beta$ is an increasing function of $\beta$ satisfying
$$\lim_{\beta\to0^+}z^-_\beta=-1, \qquad \mbox{ and } \qquad \lim_{\beta\to F'(u_-)}z^-_\beta=u_-=-(2\theta-1)^{-1/2}<0,$$
we have 
\begin{equation}\label{eq:G''-}
	G_\beta''(z^-_\beta)>0, \qquad \mbox{ for any } \beta\in(0,F'(u_-)).
\end{equation} 
Moreover, let use denote by $z^+_\beta$ the only point such that 
\begin{equation}\label{eq:G'+}
	G_\beta(z^+_\beta)=G_\beta(z^-_\beta), \qquad \quad G'_\beta(z^+_\beta)<0, \qquad \mbox{ for any } \beta\in(0,F'(u_-)),
\end{equation} 
see Figure \ref{fig:2trans} above.
From \eqref{eq:first-ode}, it follows that
$$\e^p|u_x|^p=\frac{p}{p-1}\left[G_\beta(u)-G_\beta(z^-_\beta)\right],$$
and the function $\psi_\beta$ we are looking for is implicitly defined by
$$\int_{\psi_\beta(x)}^{z^+_\beta}\left[G_\beta(s)-G_\beta(z^-_\beta)\right]^{-1/p}\,ds=\left(\frac{p}{p-1}\right)^{1/p}\e^{-1}|x|.$$
We claim that if $p>2$, then
\begin{equation}\label{eq:claim-int}
	\int_{z^-_\beta}^{z^+_\beta}\left[G_\beta(s)-G_\beta(z^-_\beta)\right]^{-1/p}\,ds<\infty,
\end{equation}
and the thesis holds true with
\begin{equation*}
	\omega_\beta:=\e\left(\frac{p-1}{p}\right)^{1/p}\int_{z^-_\beta}^{z^+_\beta}\left[G_\beta(s)-G_\beta(z^-_\beta)\right]^{-1/p}\,ds.
\end{equation*}
In order to prove \eqref{eq:claim-int}, let us consider the two integrals
$$I^+:=\int_{0}^{z^+_\beta}\left[G_\beta(s)-G_\beta(z^-_\beta)\right]^{-1/p}\,ds,  \qquad  
I^-=\int_{z^-_\beta}^{0}\left[G_\beta(s)-G_\beta(z^-_\beta)\right]^{-1/p}\,ds.$$
Concerning $I^+$, we use \eqref{eq:G'+} and the expansion 
$$G_\beta(s)=G_\beta(z^-_\beta)+G'_\beta(z^+_\beta)(s-z^+_\beta)+\mathcal{O}\left(|s-z^+_\beta|^2\right),$$
to deduce that 
$$I^+:=\int_{0}^{z^+_\beta}\left[G_\beta(s)-G_\beta(z^-_\beta)\right]^{-1/p}\,ds\sim\int_{0}^{z^+_\beta}\left(z^+_\beta-s\right)^{-1/p}\,ds<\infty,$$
for any $p>1$.
On the hand, using \eqref{eq:G''-} and the expansion
$$G_\beta(s)-G_\beta(z^-_\beta)=\frac{G''_\beta(z^+_\beta)}2(s-z^+_\beta)^2+\mathcal{O}\left(|s-z^+_\beta|^3\right),$$
we infer
$$I^-=\int_{z^-_\beta}^{0}\left[G_\beta(s)-G_\beta(z^-_\beta)\right]^{-1/p}\,ds\sim\int_{z^-_\beta}^0\left(s-z^-_\beta\right)^{-2/p}\,ds<\infty,$$
if and only if $p>2$.
Hence, \eqref{eq:claim-int} holds true if and only if $p>2$ and the proof is complete.
\end{proof}

The crucial point of Proposition \ref{prop:2trans} is that the pulse attains the value $z^-_\beta$ ($z^+_\beta$) in the case $\beta>0$ ($\beta<0$):
for definiteness, if considering $\beta>0$, one has 
\begin{equation}\label{eq:attained}
	\psi_\beta(\pm\omega_\beta)=z_\beta^-, \qquad  \psi'_\beta(\pm\omega_\beta)=0, \qquad\qquad \mbox{ for some }\, \omega_\beta>0.
\end{equation}
Thanks to \eqref{eq:attained}, we can construct solutions to  \eqref{eq:BVP} on any bounded interval $(a,b)$, see the subsequent Section \ref{rem:2trans}; 
this is a consequence of the presence of the $p$-Laplacian with $p>2$. 
Indeed, if $p \in (1,2]$, there are not any homoclinic orbits satisfying \eqref{eq:attained}: in this case, the pulse satisfies \eqref{eq:attained} with $\omega_\beta=+\infty$ and it can not be \emph{truncated} to obtain a solution to \eqref{eq:BVP} in any bounded interval $(a,b)$, because the derivative vanishes only in one point.

\subsection{Construction of stationary solutions with $N\geq 2$ transition points}\label{rem:2trans} 

We briefly explain how to construct  solutions to \eqref{eq:BVP} for any interval $[a,b]$ starting from the ones presented in Proposition \ref{prop:2trans}: the main idea is to take advantage of the fact that, for any $\beta \in (F'(u_+), F'(u_-))$, we have a homoclinic solution to \eqref{eq:ODE}. For simplicity, we consider the case $\beta \in (0, F'(u_-))$ (corresponding to the homoclinic joining $z_\beta^-$ to itself), being the other case completely symmetric.
\subsection*{The case $N=2$}
In order to construct solutions with two layers, let us start by choosing $\beta$ such that, for any  $a < h_1 < h_2<b$,  the corresponding pulse has two transitions exactly located in $h_1$ and $h_2$; such arbitrary choice of both the transition points is possible because of the behavior, with respect to $\beta$, of the following function 
$$
	d^\e(\beta):=2 \e\left(\frac{p-1}{p}\right)^{1/p}\int_{0}^{z^+_\beta}\left[G_\beta(s)-G_\beta(z^-_\beta)\right]^{-1/p}\,ds.
$$
Indeed, this function represents the ``space"  needed for the solution to go from zero to $z^+_\beta$ and viceversa, and so the distance between the two transitions. 
To be more precise, $d^\e(\beta)$ is a monotone  decreasing function that  enjoys the following properties:
\begin{equation}\label{eq:d^eps}
	\lim_{\beta \to 0^+} d^\e(\beta) = +\infty \qquad \mbox{and} \qquad \lim_{\beta \to F'(u_-)^-} d^\e(\beta) = 0.
\end{equation}
As a consequence, the function $d^\e(\beta)$ attains all the values  in $(0,+\infty)$, meaning that, for all $\e>0$, 
there exists a unique $\bar\beta_\e$ such that  $d^\e(\bar\beta_\e) \equiv h_2-h_1$.
With such a choice of $\bar\beta_\e$, we have thus constructed a solution to \eqref{eq:ODE} with exactly two transitions that are arbitrarily located in $(a,b)$:  however, in order to be sure that such solution also satisfies the homogeneous Neumann boundary conditions in \eqref{eq:BVP},  
we have to require that it attains the value $z_\beta^-$ ``before" the boundary points $x=a$ and $x=b$,
implying that the values $h_1$ and $h_2$ can not be chosen too close to them. In conclusion,  solutions to \eqref{eq:BVP} with $N=2$ layers exist, but the location of transitions {\it can not be completely random}. 

It is very important to notice that the function $d^\e(\beta)$ can be written as $d^\e(\beta):=\e\tilde d(\beta)$, where $\tilde d$ satisfies \eqref{eq:d^eps};
therefore, if $h_1$ and $h_2$ are arbitrary chosen in $(a,b)$ (but not so close to the boundary points) and $\e>0$ is very small, 
then we also need to choose a very small $\bar\beta_\e>0$ and, as a consequence of \eqref{eq:z_beta}, 
we have a pulse with minimum (maximum) value very close to $-1$ ($+1$): in formulas, we have
\begin{equation*}
	\lim_{\e\to0^+}\bar\beta_\e=0, \qquad \mbox{ and } \qquad \lim_{\e\to0^+}z^\pm_{\bar\beta_\e}=\pm1.
\end{equation*}
\subsection*{The case of $N >2$ transition points}
To conclude this section, we briefly mention that  one can construct  solutions to \eqref{eq:ODE} having an  arbitrary number $N >2$ of transition points: heuristically, the idea is to proceed as in the case $N=2$, and to ``glue" together different translations of the pulse previously constructed in Proposition \ref{prop:2trans}.    

We thus start with a pulse connecting the value $z_\beta^-$ to itself and which has two transitions with distance $h_2-h_1$; 
at this point one can ``glue another pulse" and, since $z_\beta^-$ is a critical point for the potential $G_\beta$, the layered solution can remain constantly equal to $z_\beta^-$ in an interval of random length; after that, once the transition occurs,  the solution touches the value $z_\beta^+$, that is not an equilibrium for the equation. Hence, the following transition (from $z_\beta^+$ to $z_\beta^-$) is fixed by the distance $h_2-h_1$, that has to be repeated. To be more precise, the solution has $N$ transitions satisfying:
$$ h_{2i}- h_{2i-1} =h_2-h_1, \qquad   i = 2, \dots ,\left[ \frac{N}{2} \right].$$ 

Hence, also in this case, these solutions have transition points which are not arbitrarily located.

\begin{rem}
It has to be noticed that in the proof of Proposition \ref{prop:2trans} we never used the fact that $\theta \geq p$; hence, all the previous results hold true also in the case $1<\theta<p$, thus providing the existence of  stationary solutions with a generic number $N \in \N$ of  transition points that are not arbitrarily located (as opposite to the ones given by Proposition \ref{prop:teta<p}). Again, we stress that the crucial point is the use of the homoclinic orbits instead of the heteroclinic ones. 
\end{rem}

\section{Slow motion}\label{sec:slow}
The aim of this section is to investigate the slow motion of the solutions to the initial boundary value problem \eqref{eq:CH-model}-\eqref{eq:initial}-\eqref{eq:Neu-p},
when the potential $F$ is given by \eqref{eq:F} with $\theta\geq p>1$.
As we sketched in the Introduction, we rigorously prove the existence of metastable states for the model \eqref{eq:CH-model}-\eqref{eq:Neu-p}, that is the persistence of unstable structures for a very long time $T_\e>0$, satisfying $T_\e\to+\infty$, as $\e\to0^+$, and we show that the slow evolution of the solutions strongly depends on the interplay between the two parameters $\theta,p>1$.
In particular, in the critical case $\theta=p$, we have $T_\e\geq\exp(C/\e)$, with $C>0$ independent of $\e$ (exponentially slow motion) 
and we extend to \eqref{eq:CH-model} the classical results valid for the standard Cahn--Hilliard equation \eqref{eq:Ca-Hi};
on the other hand, in the degenerate case $\theta>p$, the unstable structures persist for a time $T_\e\geq C\e^{-k}$, for some $C,k>0$, 
independent on $\e$, and we only have algebraic slow motion.

Before stating our main results, we present some crucial properties of the energy functional \eqref{eq:energy}, 
that allow us to obtain slow motion of solutions by adapting to our case the energy approach previously mentioned in Section \ref{sec:intro-main}, 
see \cite{Bron-Hilh,Bron-Kohn,MMAS19,DCDS,Grant}.

\subsection{Energy estimates}
From now on, we will use the notation $\tilde u : [a,b]\to\R$ to denote the antiderivative of a generic function $u:[a,b]\to\R$, satisfying $\tilde u(a)=0$.
Hence, if $u$ is the solution to the initial boundary value problem \eqref{eq:CH-model} with initial datum \eqref{eq:initial} 
and boundary conditions \eqref{eq:Neu-p},  we introduce the function
\begin{equation*}\label{eq:u-tilde}
	\tilde{u}(x,t):=\int_a^xu(y,t)\,dy.
\end{equation*}
Clearly, $\tilde{u}(a,t)=0$, for any $t>0$; moreover, since the solution to \eqref{eq:CH-model}-\eqref{eq:initial}-\eqref{eq:Neu-p} preserves the mass and $u_x=\tilde{u}_{xx}$,
we have the following Dirichlet boundary conditions for $\tilde u$:
\begin{equation}\label{eq:bound-ut}
	\tilde u(a,t)=0, \qquad \tilde u(b,t)=\int_a^bu_0(y)\,dy, \qquad \tilde{u}_{xx}(a,t)=\tilde{u}_{xx}(b,t)=0, \qquad \forall\,t\geq0.
\end{equation}
By integrating \eqref{eq:CH-model} and using the boundary conditions \eqref{eq:Neu-p} at $x=a$, we deduce that
\begin{equation}\label{eq:integrated}
	\tilde u_t= D(\tilde u_x)\left(-\e^p(|\tilde u_{xx}|^{p-2}\tilde u_{xx})_x+F'(\tilde u_x)\right)_x,
\end{equation}
where we used the equality $\tilde u_x=u$.

%Let us observe that, if $u\in L^2(a,b)$ with $\displaystyle\int_a^b u\,dx=0$, then we can write $\tilde u=-y_x$, where $y_x$ satisfies 
%\begin{equation*}
%	-y_{xx}=u \qquad \mbox{ in } \, (a,b), \qquad y_x(a)=y_x(b)=0.
%\end{equation*}

The next result ensures that the energy functional defined in \eqref{eq:energy} is a non-increasing function of time 
if evaluated along a smooth solution to \eqref{eq:CH-model}-\eqref{eq:Neu-p}.
\begin{lem}\label{lem:energy-dec}
Let $u\in C([0,T],H^3(a,b))$ be a solution to \eqref{eq:CH-model}, with $D\in C^1(\R)$ strictly positive and satisfying the boundary conditions \eqref{eq:Neu-p}.
Then,
\begin{equation}\label{eq:der-energy}
	\frac{d}{dt}E_\e[u](t)=-\e^{-1}\int_a^b\frac{\tilde u^2_t(x,t)}{D(u(x,t))}\,dx,
\end{equation}
for any $t\in[0,T]$.
As a consequence, there exists $d>0$ such that
\begin{equation}\label{eq:energy-est}
	E_\e[u](0)-E_\e[u](t)\geq d\e^{-1}\int_0^t\|\tilde u_t(\cdot,s)\|^2_{{}_{L^2}}\,ds,
\end{equation}
for any $t\in[0,T]$.
\end{lem}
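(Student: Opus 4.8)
The plan is to differentiate the energy in time and integrate by parts twice, exploiting the two boundary conditions in \eqref{eq:Neu-p} to discard the boundary contributions, and then to use the rewriting \eqref{eq:integrated} of the equation in terms of $\tilde u$. First I would differentiate $E_\e[u](t)$ under the integral sign. Writing the \emph{chemical potential} $\mu := -\e^p(|u_x|^{p-2}u_x)_x + F'(u)$, this gives
\[
\frac{d}{dt}E_\e[u](t) = \int_a^b\left[\e^{p-1}|u_x|^{p-2}u_x\,u_{xt} + \e^{-1}F'(u)\,u_t\right]dx.
\]
Integrating the first term by parts in $x$ produces the boundary term $\e^{p-1}\bigl[|u_x|^{p-2}u_x\,u_t\bigr]_a^b$, which vanishes by the first boundary condition $u_x(a,t)=u_x(b,t)=0$ in \eqref{eq:Neu-p}; hence
\[
\frac{d}{dt}E_\e[u](t) = \e^{-1}\int_a^b\left[-\e^{p}(|u_x|^{p-2}u_x)_x + F'(u)\right]u_t\,dx = \e^{-1}\int_a^b \mu\,u_t\,dx.
\]

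Next I would use equation \eqref{eq:CH-model}, which reads $u_t = (D(u)\mu_x)_x$, and integrate by parts once more. Since the flux $J = -D(u)\mu_x$ vanishes at $x=a,b$ by the second boundary condition in \eqref{eq:Neu-p}, the boundary term again disappears and
\[
\int_a^b \mu\,u_t\,dx = \int_a^b \mu\,(D(u)\mu_x)_x\,dx = -\int_a^b D(u)\,\mu_x^2\,dx.
\]
To convert this into the stated form, I would invoke \eqref{eq:integrated}: since $\tilde u_x = u$, the identity there reads exactly $\tilde u_t = D(u)\mu_x$, so $\mu_x = \tilde u_t/D(u)$ and $D(u)\mu_x^2 = \tilde u_t^2/D(u)$. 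Substituting yields \eqref{eq:der-energy}.

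For the estimate \eqref{eq:energy-est} I would integrate \eqref{eq:der-energy} over $[0,t]$, obtaining
\[
E_\e[u](0) - E_\e[u](t) = \e^{-1}\int_0^t\int_a^b \frac{\tilde u_t^2}{D(u)}\,dx\,ds.
\]
Because $u\in C([0,T],H^3(a,b))\hookrightarrow C([a,b]\times[0,T])$, the range of $u$ is compact; as $D$ is continuous and strictly positive, $D(u)$ is bounded above on $[a,b]\times[0,T]$, and setting $d := 1/\max D(u) > 0$ gives $1/D(u)\geq d$ pointwise, whence \eqref{eq:energy-est}.

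The main obstacle I anticipate is not the algebra but the justification of differentiation under the integral and of the two integrations by parts in the \emph{subcritical/critical regime} $p\leq 2$, where $|u_x|^{p-2}$ is singular at the zeros of $u_x$. One has to check that $t\mapsto \e^{p-1}p^{-1}|u_x|^p$ is differentiable with derivative $\e^{p-1}|u_x|^{p-2}u_x\,u_{xt}$, which is integrable since it is $\mathcal{O}(|u_x|^{p-1})$ near $\{u_x=0\}$ for $p>1$, and that the resulting expression may legitimately be paired with $u_t$ and $(D(u)\mu_x)_x$. This is precisely where the assumed regularity $u\in C([0,T],H^3(a,b))$ and the strict positivity of $D$ enter.
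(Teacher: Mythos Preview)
Your argument is correct and matches the paper's proof almost exactly: differentiate $E_\e$, integrate by parts once using $u_x=0$ at the endpoints, and integrate by parts a second time to land on $-\e^{-1}\int D(u)\mu_x^2\,dx=-\e^{-1}\int \tilde u_t^2/D(u)\,dx$. The only cosmetic difference is that the paper carries out the second integration by parts after writing $u_t=\tilde u_{xt}$ and uses the boundary condition $\tilde u_t(a,t)=\tilde u_t(b,t)=0$ from \eqref{eq:bound-ut}, whereas you substitute $u_t=(D(u)\mu_x)_x$ first and kill the boundary term via the no-flux condition $D(u)\mu_x=0$; since $\tilde u_t=D(u)\mu_x$ these are the same thing.
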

\begin{proof}
Since the solution is regular enough, we can differentiate as follows
\begin{equation*}
	\frac{d}{dt}E_\e[u]=\int_a^b\left[\e^{p-1}|u_x|^{p-2}u_x u_{xt}+\frac{F'(u)u_t}{\e}\right]dx.
\end{equation*}
Integrating by parts and using the boundary conditions \eqref{eq:Neu-p}, we obtain
\begin{align*}
	\frac{d}{dt}E_\e[u]&=\int_a^bu_t\left[-\e^{p-1}(|u_x|^{p-2}u_x)_x+\frac{F'(u)}{\e}\right]dx\\
	&=\e^{-1}\int_a^b\tilde u_{xt}\left[-\e^p(|\tilde u_{xx}|^{p-2}\tilde u_{xx})_x+F'(\tilde u_x)\right]dx,
\end{align*}
where we used again the equality $\tilde u_x=u$.
Integrating again by parts, we infer
\begin{equation*}
	\frac{d}{dt}E_\e[u]=\e^{-1}\int_a^b\tilde u_{t}\left[-\e^p(|\tilde u_{xx}|^{p-2}\tilde u_{xx})_x+F'(\tilde u_x)\right]_xdx,
\end{equation*}
where the boundary terms coming from the integration by parts vanish because of \eqref{eq:bound-ut} ($\tilde u(a,t)$ and $\tilde u(b,t)$ do not depend on $t$,
and so, $\tilde u_t(a,t)=\tilde u_t(b,t)=0$, for any $t\in[0,T]$).
Therefore, since $D$ is strictly positive, \eqref{eq:integrated} gives equality \eqref{eq:der-energy};
integrating in $[0,t]$, we end up with \eqref{eq:energy-est}, with
$$d:=\left(\max_{(x,t)\in[a,b]\times[0,T]} D(u(x,t))\right)^{-1},$$
and the proof is complete.
\end{proof}

Thanks to \eqref{eq:energy-est}, we shall prove that it is possible to choose a very large $T_\e>0$ such that
\begin{equation}\label{eq:key}
	\int_0^{T_\e}\|\tilde u_t(\cdot,s)\|^2_{{}_{L^2}}\,ds\leq\sigma(\e),
\end{equation}
with $\sigma:\R^+\to\R^+$ very small.
Hence, the idea is to take advantage of the smallness of the $L^2$-norm of $\tilde u_t(\cdot,t)$ in $[0,T_\e]$ to prove the aforementioned slow motion results.

The strategy to prove \eqref{eq:key} is based on \eqref{eq:energy-est}: 
first, we prove a \emph{lower bound} on the energy \eqref{eq:energy}, then we consider properly assumptions on the initial datum, 
such that the variation of the energy is very small for any $t\in[0,T_\e]$, with $T_\e\to+\infty$, as $\e\to0^+$. 

Following \cite{DCDS}, we make use of the generalized Young inequality
\begin{equation}\label{young}
	a b \leq \frac{a^p}{p}+ \frac{b^q}{q}, \qquad \mbox{with} \quad \frac{1}{p}+\frac{1}{q}=1,
\end{equation}
to deduce
\begin{equation}\label{eq:c_p}
	E_\e[{u}]\geq\int_a^b  |u_x| \left( \frac{p}{p-1} F(u)\right)^{\frac{p-1}{p}} \,dx=\left(\frac{p}{p-1}\right)^{\frac{p-1}{p}}\int_{-1}^{+1} {F(s)}^\frac{p-1}{p}\,ds=:c_p,
\end{equation}
for any $u\in C^1[a,b]$ connecting $-1$ and $+1$.
It is to be observed that when $p=2$, one has 
\begin{equation*}
	c_2=\int_{-1}^{+1}\sqrt{2F(s)}\,ds,
\end{equation*}
which is the minimum energy in the case of the classical Allen--Cahn and Cahn--Hilliard equations \cite{Bron-Hilh, Bron-Kohn}.
The positive constant $c_p$ represents the minimum energy to have a transition between $-1$ and $+1$ in the following sense: 
if a function $u$ is sufficiently close to a function $v$ in some sense to be specified later,  
where $v:[a,b]\to\{-1,1\}$ is a piecewise constant function assuming only the values $\pm1$ with exactly $N$ jumps, then the energy of $u$ satisfies the \emph{lower bound},
\begin{equation}\label{eq:lower-sigma}
	E_\varepsilon[u]\geq Nc_p-\sigma(\varepsilon),
\end{equation}
where $\sigma:\R^+\to\R^+$ is a \emph{small reminder} that depends on $\theta,p$.
More precisely, we will present two different lower bounds of the form \eqref{eq:lower-sigma}, depending on whether $\theta=p$ or $\theta>p$:
\begin{itemize}
	\item if $\theta=p$, then $\sigma(\e)$ is exponentially small, that is $\sigma(\e)=C\exp(-Ap/2\varepsilon)$, for some $A,C>0$ independent on $\e$, 
	for details see Proposition \ref{prop:lower} below.
	\item If $\theta>p$, then $\sigma(\e)$ is algebraically small, that is $\sigma(\e)=C\varepsilon^k$, for some $C,k>0$ independent on $\e$, 
	for details see \ref{prop:lower_deg} below.
\end{itemize}

We stress that \eqref{eq:lower-sigma} is a variational result that depends only on the structure of the energy functional \eqref{eq:energy} 
and in its proof equation \eqref{eq:CH-model} does not play a role.
In fact, this result has been already proved in \cite{DCDS}, but we need a different assumption on the function $u$ in the case $\theta=p$, so that
 we have to slightly modify the proof in such a case.

Let us fix here, and throughout the rest of the paper, $N\in\mathbb{N}$ and a {\it piecewise constant function} $v$ with $N$ jumps as follows:
\begin{equation}\label{vstruct}
	v:[a,b]\rightarrow\{-1,1\}\  \hbox{with $N$ jumps located at } a<h_1<h_2<\cdots<h_N<b.
\end{equation}	
Moreover, we fix $r>0$ such that
\begin{equation}\label{eq:r}
	r<\frac{h_{i+1}-h_i}2, \ \hbox{ for}\ i=1,\dots,N, \qquad   a\leq h_1-r,\qquad h_N+r\leq b.
\end{equation}
Finally, for any $p>1$, define 
\begin{equation}\label{eq:lambda}
	\lambda_p:= 2^{1-\frac{1}{p}}{(p-1)^{-\frac{1}{p}}}.
\end{equation}
We have now all the tools to present the lower bound \eqref{eq:lower-sigma} with a an exponentially small reminder $\sigma$ in the case $\theta=p$.
\begin{prop}\label{prop:lower}
Let $E_\e$ be as in \eqref{eq:energy}, with $F$ given by \eqref{eq:F} and $p=\theta>1$.	
Moreover, fix $v$ as in \eqref{vstruct} and fix $A\in(0,r\sqrt2\lambda_p)$, 
where $r$ satisfies \eqref{eq:r} and $\lambda_p$ is defined in \eqref{eq:lambda}.
Then there exist $\e_0,C,\delta>0$ (depending only on $p,v$ and $A$) such that if $u\in H^1(a,b)$ satisfies
\begin{equation}\label{eq:u-v}
	\|\tilde{u}-\tilde{v}\|_{{}_{L^1}}\leq\delta,
\end{equation}
then for any $\e\in(0,\e_0)$,
\begin{equation}\label{eq:lower}
	E_\varepsilon[u]\geq Nc_p-C\exp(-Ap/2\varepsilon),
\end{equation}
where $c_p$ is defined in \eqref{eq:c_p}.
\end{prop}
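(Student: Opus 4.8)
The plan is to argue purely variationally, reducing \eqref{eq:lower} to a sharp single-transition estimate and exploiting the degenerate behaviour of $F$ at the wells $\pm1$, along the lines of \cite{DCDS,Bron-Kohn,Bron-Hilh}; the only genuinely new ingredient is the passage from the hypothesis \eqref{eq:u-v} on the antiderivatives to pointwise information on $u$. First I would make the harmless reduction: if $E_\e[u]\ge Nc_p$ there is nothing to prove, so I may assume the \emph{a priori} bound $E_\e[u]\le Nc_p$ and use it freely. Then I localize: since $v$ has $N$ jumps separated by more than $2r$ (cf. \eqref{eq:r}), the intervals $I_i:=(h_i-r,h_i+r)$ are pairwise disjoint, and by nonnegativity of the integrand in \eqref{eq:energy} one has $E_\e[u]\ge\sum_{i=1}^N E_\e[u;I_i]$, where $E_\e[u;I_i]$ denotes the energy restricted to $I_i$. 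It therefore suffices to prove $E_\e[u;I_i]\ge c_p-C\exp(-Ap/2\e)$ for each $i$.

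Second, I locate good points. The bound $E_\e[u]\le Nc_p$ gives $\int_a^b F(u)\,dx\le \e\,Nc_p$, whence for any fixed $\mu>0$ the set where $u$ lies at distance $>\mu$ from $\{-1,+1\}$ has measure $O(\e)$, since there $F(u)\ge F(1-\mu)>0$. The smallness of $\|\tilde u-\tilde v\|_{{}_{L^1}}$ in \eqref{eq:u-v} then determines \emph{which} well $u$ sits near on each subinterval, matching the prescribed pattern of $v$: because $\tilde v$ is piecewise affine with slope $v=\pm1$, $L^1$-closeness of $\tilde u$ to $\tilde v$ prevents $u$ from selecting the wrong value on $(h_i-r,h_i)$ and on $(h_i,h_i+r)$. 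Combining the two facts produces points $x_i^-\in(h_i-r,h_i)$ and $x_i^+\in(h_i,h_i+r)$ at which $u$ is close to the two opposite wells $\mp1$. This is precisely where I deviate from \cite{DCDS}, whose hypothesis is phrased in terms of $\|u-v\|_{{}_{L^1}}$: here I must first convert \eqref{eq:u-v} into such measure/pointwise control, and this is the reason the proof must be modified.

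Third, the sharp transition estimate. On $(x_i^-,x_i^+)$ I apply the Young inequality \eqref{young} exactly as in \eqref{eq:c_p} to get $E_\e[u;I_i]\ge|\Phi(u(x_i^+))-\Phi(u(x_i^-))|$, where $\Phi'=\bigl(\tfrac{p}{p-1}F\bigr)^{(p-1)/p}$ and $\Phi(1)-\Phi(-1)=c_p$. The deficit from $c_p$ is governed by how far $u(x_i^\pm)$ are from $\pm1$, and here the critical structure $\theta=p$ is decisive: near the wells $F(s)\sim \mathrm{const}\cdot|1\mp s|^p$, so $F^{-1/p}$ is only logarithmically divergent at $\pm1$, and the competitor cannot linger away from the wells without paying potential energy of order $\e^{-1}$. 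Together with the length $r$ available on each side, this forces $1\mp u(x_i^\pm)=O(\exp(-\lambda_p r/\e))$ with $\lambda_p$ as in \eqref{eq:lambda}. Since $\Phi(1)-\Phi(1-\eta)\sim C\eta^p$, the deficit is of order $\exp(-\lambda_p p\,r/\e)$, which is absorbed into $C\exp(-Ap/2\e)$ for any $A<r\sqrt2\,\lambda_p$. Summing over $i=1,\dots,N$ gives \eqref{eq:lower}.

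The main obstacle is precisely this last quantitative point: upgrading the qualitative statement ``$u$ is close to $\pm1$'' to the exponential rate $\exp(-\lambda_p r/\e)$. This demands a careful interplay between the gradient and potential terms in \eqref{eq:energy}, relying on the degenerate vanishing $F(s)\sim|1\mp s|^p$ at the wells, which is exactly the feature separating the critical case $\theta=p$ (logarithmic divergence, exponential rate) from the supercritical case $\theta>p$ (where $F^{-1/p}$ is integrable and only an algebraic rate is available). This quantitative estimate is essentially the content of the corresponding lemma in \cite{DCDS}, which I would reproduce with the modified localization of the points $x_i^\pm$ described above.
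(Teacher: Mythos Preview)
Your overall strategy is correct and coincides with the paper's: localize to the intervals $I_i$, find points $x_i^\pm$ on either side of $h_i$ where $u$ is close to the appropriate well, then invoke the sharp single–transition estimate from \cite{DCDS} (Young's inequality plus the logarithmic divergence of $F^{-1/p}$ at the wells when $\theta=p$) to get the exponential remainder. The paper in fact writes out \emph{only} the step that differs from \cite{DCDS}, namely the passage from the antiderivative hypothesis \eqref{eq:u-v} to the existence of the good points $x_i^\pm$.

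The difference lies precisely in how that passage is made. You propose a two–stage argument: first use the a~priori bound $E_\e[u]\le Nc_p$ to force $u$ into a $\mu$–neighbourhood of $\{-1,+1\}$ off a set of measure $O(\e)$, and then use smallness of $\|\tilde u-\tilde v\|_{L^1}$ to rule out the wrong well on each half–interval. The paper's route is more direct and does not use the energy bound at all for this step: it observes that for any $w\in C^1_c(a,b)$ one has
\[
\int_a^b (u-v)\,w\,dx=-\int_a^b(\tilde u-\tilde v)\,w'\,dx,
\]
so $\bigl|\int(u-v)w\bigr|\le\delta\|w'\|_{L^\infty}$. If $|u-v|\ge\rho$ on all of a subinterval $J\subset(h_i,h_i+\hat r)$, continuity of $u$ forces $u-v$ to keep a sign on $J$, and testing with a nonnegative $w$ supported in $J$ gives $\rho\|w\|_{L^1}\le\delta\|w'\|_{L^\infty}$, a contradiction for $\delta$ small. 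This produces the points $x_i^\pm$ in one stroke.

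Your approach can be made to work, but the ``sign determination'' you sketch (that $L^1$–closeness of $\tilde u$ to $\tilde v$ prevents $u$ from sitting near the wrong well on $(h_i,h_i+r)$) is the part that actually needs justification, and it is subtler than it looks: errors in $\tilde u-\tilde v$ coming from different subintervals can partially cancel, so one must argue globally rather than locally. The paper's integration–by–parts/test–function trick sidesteps this entirely and is the cleaner device to adopt here.
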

\begin{proof}
The only difference with respect to the proof in \cite[Propositions 3.2]{DCDS} is to show that we can choose $\delta$ in the assumption \eqref{eq:u-v}
 such that that $u$ is arbitrary close to $+1$ (or $-1$) in a point.
Hence, we report here only this modification and refer to \cite[Propositions 3.2]{DCDS} for all the details of the proof.  
	
Fix $u\in H^1(a,b)$ satisfying \eqref{eq:u-v}, and take $\hat r\in(0,r)$ and $\rho$ arbitrary small.
Let us focus our attention on $h_i$, one of the points of discontinuity of $v$. To fix ideas, 
let $v(h_i\pm r)=\pm1$, the other case being analogous.
We claim that we can choose $\delta>0$ sufficiently small that there exist $r_+$ and $r_-$ in $(0,\hat r)$ such that
\begin{equation}\label{2points}
	|u(h_i+r_+)-1|<\rho, \qquad \quad \mbox{ and } \qquad \quad |u(h_i-r_-)+1|<\rho.
\end{equation}
Indeed, we have 
\begin{equation*}
	\int_a^b(u-v)w\,dx=-\int_a^b(\tilde{u}-\tilde{v})w'\,dx,
\end{equation*}
for any test function $w\in C^1_c([a,b])$.
Thus, by using \eqref{eq:u-v}, we have 
\begin{equation}\label{eq:keyidea}
	\left|\int_a^b(u-v)w\,dx\right|\leq\|w'\|_{{}_{L^\infty}}\int_a^b|\tilde{u}-\tilde{v}|\,dx\leq\delta\|w'\|_{{}_{L^\infty}},
\end{equation}
for any test function $w\in C^1_c([a,b])$.
Assume by contradiction that $|u-1|\geq\rho$ throughout $(h_i,h_i+\hat{r})$. 
Since $u-v$ is continuous in $(h_i,h_i+\hat{r})$, one has either $u-1\geq\rho>0$ or $u-1\leq-\rho<0$ in the whole interval under consideration. 
Therefore, choosing $w$ non constant and non negative with compact support contained in $(h_i,h_i+\hat{r})$, we obtain
$$\rho\|w\|_{{}_{L^1}}\leq\int_{h_i}^{h_i+\hat{r}}|u-1|w\,dx=\left|\int_a^b(u-v)w\,dx\right|\leq\delta\|w'\|_{{}_{L^\infty}},$$
and this leads to a contradiction if we choose $\delta\in(0,\rho\|w\|_{{}_{L^1}}/\|w'\|_{{}_{L^\infty}})$.
Similarly, one can prove the existence of $r_-\in(0,\hat r)$ such that $|u(h_i-r_-)+1|<\rho_2$.
\end{proof}

Next, we recall the lower bound \eqref{eq:lower-sigma} in the case $\theta>p>1$, cfr. \cite[Proposition 4.1]{DCDS}.

\begin{prop}\label{prop:lower_deg}
Let $p>1$, $F$ given by \eqref{eq:F} with $\theta>p$,
$v:(a,b)\rightarrow\{-1,+1\}$ a piecewise constant function with exactly $N$ discontinuities (as in \eqref{vstruct}) and define the sequence
\begin{equation}\label{eq:exp_alg}
	\begin{cases}
		k_1=0,\\
		k_2:=\alpha, \\
		k_{m+1}:=\alpha(k_{m}+1), \qquad m\geq2, 
	\end{cases}
	\quad \mbox{where}  \quad \alpha:=\displaystyle\frac{p-1}{p}+\frac{1}{\theta}.
\end{equation}
Then, for any $m\in \mathbb N$, there exist constants $\delta_m>0$ and $C>0$ such that if $u\in H^1(a,b)$ satisfies
\begin{equation}\label{|w-v|_L^1<delta_l}
	\|u-v\|_{L^1}\leq\delta_m,
\end{equation}
and
\begin{equation}\label{E_p(w)<Nc0+eps^l}
	E_\e[u]\leq Nc_p+C\varepsilon^{k_{m}},
\end{equation}
with $\varepsilon$ sufficiently small, then
\begin{equation}\label{eq:lower_alg}
	E_\e[u]\geq Nc_p-C_m\varepsilon^{k_{m+1}},
\end{equation}
where $E_\e$ and $c_p$ are defined in \eqref{eq:energy} and \eqref{eq:c_p}, respectively.
\end{prop}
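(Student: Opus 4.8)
The plan is to prove the statement by induction on $m$, using as skeleton the pointwise lower bound coming from Young's inequality \eqref{young}, namely $E_\e[u]\ge\int_a^b|u_x|\bigl(\tfrac{p}{p-1}F(u)\bigr)^{(p-1)/p}\,dx$ established in \eqref{eq:c_p}, together with the two‑sided energy control that the induction makes available. The crucial quantity is the cost of \emph{missing} the wells: setting $\gamma:=\frac{\theta(p-1)}{p}+1$, the degeneracy $F(s)\sim\frac{2^{\theta-1}}{\theta}(1-s)^\theta$ as $s\to1^-$ (and symmetrically at $-1$) yields $\int_{1-\eta}^{1}\bigl(\tfrac{p}{p-1}F(s)\bigr)^{(p-1)/p}\,ds\sim C\eta^{\gamma}$. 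Hence, if near each jump $h_i$ of $v$ the profile $u$ is known to approach $\pm1$ up to an error $\eta$, one bounds the transition cost from below by $Nc_p-CN\eta^{\gamma}$, so that $E_\e[u]\ge Nc_p-CN\eta^{\gamma}$. The entire argument consists in turning the energy upper bound assumed in the statement into an ever sharper bound on $\eta$. The decisive arithmetic identity is $\gamma/\theta=\frac{p-1}{p}+\frac1\theta=\alpha$, which is exactly what produces the recursion \eqref{eq:exp_alg}.

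First I would decompose $[a,b]$ into $N$ transition regions around $h_1,\dots,h_N$ and the complementary plateaus; on each plateau the hypothesis $\|u-v\|_{L^1}\le\delta_m$ forces $u$ into the correct well (this is where $\delta_m$, chosen decreasing in $m$, is used). The inductive step assumes the conclusion at level $m-1$, i.e. $E_\e[u]\ge Nc_p-C_{m-1}\e^{k_m}$, which together with the level‑$m$ hypothesis $E_\e[u]\le Nc_p+C\e^{k_m}$ gives the two‑sided bound $|E_\e[u]-Nc_p|\le C\e^{k_m}$. The level‑$(m-1)$ analysis also provides a distance bound $\eta_{m-1}\sim\e^{(k_{m-1}+1)/\theta}$, whence the transition cost is at least $Nc_p-C\eta_{m-1}^{\gamma}=Nc_p-C\e^{k_m}$, using $\gamma(k_{m-1}+1)/\theta=\alpha(k_{m-1}+1)=k_m$. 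Subtracting, the plateau energy, and in particular $\e^{-1}\int_{\mathrm{plateau}}F(u)\,dx$, is bounded by $C\e^{k_m}$.

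From $\int_{\mathrm{plateau}_i}F(u)\,dx\le C\e^{1+k_m}$ and the fact that each plateau has length bounded below by \eqref{eq:r}, the mean value theorem furnishes a point $\bar x_i$ with $F(u(\bar x_i))\le C\e^{1+k_m}$; the degenerate well estimate then gives $|u(\bar x_i)\mp1|\le C\e^{(1+k_m)/\theta}=:\eta_m$. Re‑running the lower bound \eqref{eq:c_p} on the transition intervals now delimited by the points $\bar x_i$, where $u$ lies within $\eta_m$ of $\pm1$, the deficit per transition is at most $C\eta_m^{\gamma}=C\e^{\gamma(1+k_m)/\theta}=C\e^{\alpha(k_m+1)}=C\e^{k_{m+1}}$, which yields the desired inequality \eqref{eq:lower_alg}. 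The base case $m=1$ is the same computation with the trivial input $k_1=0$: the hypothesis $E_\e[u]\le Nc_p+C$ and the crude estimate (transition cost $\ge Nc_p-C$, valid since $\eta\le2$) give plateau energy $\le C$, hence $\int F(u)\,dx\le C\e$, $\eta_1\sim\e^{1/\theta}$, and a lower bound with exponent $\gamma/\theta=\alpha=k_2$.

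The main obstacle is the degeneracy of $F$ at the wells when $\theta>p$ (indeed $F''(\pm1)=0$ for $\theta>2$): it is precisely the flatness quantified by $F(s)\sim(1-s)^{\theta}$ that both prevents $u$ from reaching $\pm1$ within an exponentially small error and fixes the algebraic exponents, so the estimate $\int_{1-\eta}^{1}\bigl(\tfrac{p}{p-1}F\bigr)^{(p-1)/p}\sim\eta^{\gamma}$ must be carried out carefully, including the symmetric contributions at both wells and the harmless fact that $u$ may overshoot beyond $\pm1$, where $F$ still grows. A secondary point is bookkeeping: the constants $C_m$ and the thresholds $\delta_m$ degrade with $m$, so one keeps the induction at fixed $m$ and does not let $\e_0$ depend on the limit; since $k_m\uparrow\frac{\alpha}{1-\alpha}$, no uniform‑in‑$m$ statement is expected, consistently with the merely algebraic slow motion proved later. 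Finally, as emphasized after \eqref{eq:lower-sigma}, this is a purely variational estimate in which equation \eqref{eq:CH-model} plays no role, and the argument coincides with that of \cite[Proposition 4.1]{DCDS}.
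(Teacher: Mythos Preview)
Your proposal is correct and captures the inductive bootstrap argument with the right arithmetic: the identity $\gamma/\theta=\alpha$ is precisely what drives the recursion $k_{m+1}=\alpha(k_m+1)$, and the mechanism of extracting improved pointwise closeness $\eta_m\sim\e^{(1+k_m)/\theta}$ from the plateau integral bound, then feeding it back into the Young-inequality lower bound, is the heart of the matter. Note, however, that the paper does not supply its own proof of this proposition: it simply refers the reader to \cite[Proposition~4.1]{DCDS}, which is exactly the reference you invoke at the end of your sketch, so there is nothing further to compare.
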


For the proof of this result see \cite[Proposition 4.1]{DCDS}.

\begin{rem}\label{sharp}
First of all, we observe that the assumption $\theta>p$ implies that $\alpha\in(0,1)$ and, consequently, 
the increasing sequence defined in \eqref{eq:exp_alg} satisfies
\begin{equation}\label{eq:limitalpha}
	\lim_{m\to+\infty}k_m=\frac{\alpha}{1-\alpha}=\frac{\theta p}{\theta-p}-1.
\end{equation}
Therefore, the assumption $\theta>p$ implies that the sequence \eqref{eq:exp_alg} is bounded from above and from \eqref{eq:limitalpha} it follows that, for  $\e\to0^+$,
the \emph{best exponent} we can obtain is
\begin{equation*}
	\gamma_{\theta,p}:=\lim_{m\to+\infty}k_m=\frac{\theta p}{\theta-p}-1.
\end{equation*}
\end{rem}

\subsection{Main results}
Lemma \ref{lem:energy-dec} and Propositions \ref{prop:lower}-\ref{prop:lower_deg} are the key ingredients to apply the energy approach 
introduced in \cite{Bron-Hilh,Bron-Kohn,Grant}.
First of all, we consider the case $\theta=p$ and we give the definition of a function with a \emph{transition layer structure}. 
\begin{defn}\label{def:TLS}
We say that a function $u^\e\in H^1(a,b)$ has an \emph{$N$-transition layer structure} if 
\begin{equation}\label{eq:ass-u0}
	\lim_{\varepsilon\rightarrow 0} \|u^\varepsilon-v\|_{{}_{L^1}}=0,
\end{equation}
where $v$ is as in \eqref{vstruct}, and there exist constants $C>0$, $A\in(0,r\sqrt2\lambda_p)$ 
(with $r$ satisfying \eqref{eq:r} and $\lambda_p$ defined in \eqref{eq:lambda}) such that
\begin{equation}\label{eq:energy-ini}
	E_\varepsilon[u^\varepsilon]\leq Nc_p+C\exp(-Ap/2\e),
\end{equation}
for any $\varepsilon\ll1$, where the energy $E_\e$ and the positive constant $c_ p$ are defined in \eqref{eq:energy} and in \eqref{eq:c_p}, respectively.
\end{defn}

Our first result states that the solution $u^\e(\cdot,t)$ arising from an initial datum satisfying \eqref{eq:ass-u0} and \eqref{eq:energy-ini}, 
satisfies the property \eqref{eq:ass-u0}  as well, (at least) for an exponentially long time.
Together with \eqref{eq:energy-est}, this ensures that the solution maintains the same transition layer structure of the initial datum for an exponentially long time,
thus exhibiting a metastable dynamics. It is important to notice that, for $N \geq 2$, profiles with a transition layer structure as the one introduced in Definition \ref{def:TLS}, are neither stationary solutions to \eqref{eq:CH-model}-\eqref{eq:Neu-p} nor they are close to them because of the results of Section \ref{sec:steady}  
(see, in particular, subsection \ref{rem:2trans}, where we proved that stationary solutions with a transition layer structure exist, but the layers are not randomly located).

\begin{thm}[metastable dynamics in the critical case $\theta=p$]\label{thm:main}
Let $v$ be as in \eqref{vstruct} and $A\in(0,r\sqrt{2}\lambda_p)$, with $r$ satisfying \eqref{eq:r} and $\lambda_p$ defined in \eqref{eq:lambda}.
If $u^\varepsilon$ is the solution to \eqref{eq:CH-model}, with $D\in C^1$ strictly positive, $F$ given by \eqref{eq:F} and $\theta=p>1$, 
subject to boundary conditions \eqref{eq:Neu-p} and initial datum $u_0^{\varepsilon}$ satisfying \eqref{eq:ass-u0} and \eqref{eq:energy-ini}, then, 
\begin{equation}\label{eq:limit}
	\sup_{0\leq t\leq m \, {\exp(Ap/2\varepsilon)}}\|\tilde u^\varepsilon(\cdot,t)-\tilde v\|_{{}_{L^1}}\xrightarrow[\varepsilon\rightarrow0]{}0,
\end{equation}
for any $m>0$. 
Moreover, if $p\geq2$, then 
\begin{equation}\label{eq:limit2}
	\sup_{0\leq t\leq m\e^\eta \, {\exp(Ap/2\varepsilon)}}\|u^\varepsilon(\cdot,t)-v\|_{{}_{L^1}}\xrightarrow[\varepsilon\rightarrow0]{}0,
\end{equation}
for any $m>0$ and any $\eta\in(1-2/p,1)$.
\end{thm}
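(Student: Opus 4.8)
The plan is to run the \emph{energy approach} of \cite{Bron-Kohn,Bron-Hilh,Grant}, feeding the exponential lower bound of Proposition \ref{prop:lower} into the dissipation inequality \eqref{eq:energy-est} of Lemma \ref{lem:energy-dec}. Fix $m>0$, and let $\delta>0$, $C>0$ be the constants of Proposition \ref{prop:lower} associated with $v$ and $A$. I introduce the continuation time
\[
	T_\e:=\sup\bigl\{t\geq0 : \|\tilde u^\e(\cdot,s)-\tilde v\|_{L^1}\leq\delta \ \text{ for all } s\in[0,t]\bigr\}.
\]
Since \eqref{eq:ass-u0} gives $\|u_0^\e-v\|_{L^1}\to0$ and $|\tilde u^\e(x,0)-\tilde v(x)|\leq\|u_0^\e-v\|_{L^1}$, we have $\|\tilde u_0^\e-\tilde v\|_{L^1}\leq(b-a)\|u_0^\e-v\|_{L^1}\to0$, so $T_\e>0$ for $\e$ small. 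On $[0,T_\e]$ the hypothesis \eqref{eq:u-v} of Proposition \ref{prop:lower} holds, hence \eqref{eq:lower} applies; combining it with the monotonicity of $E_\e$ (Lemma \ref{lem:energy-dec}) and the initial bound \eqref{eq:energy-ini} gives $E_\e[u^\e](0)-E_\e[u^\e](t)\leq 2C\exp(-Ap/2\e)$ for $t\in[0,T_\e]$. Inserting this into \eqref{eq:energy-est} (with $d$ bounded below, since $u^\e$ stays in a fixed bounded set and $D$ is continuous and positive) controls the dissipation: $\int_0^t\|\tilde u^\e_s\|_{L^2}^2\,ds\leq C'\e\exp(-Ap/2\e)$.

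To prove \eqref{eq:limit}, I would write $\tilde u^\e(\cdot,t)-\tilde u_0^\e=\int_0^t\tilde u^\e_s\,ds$ and apply Cauchy--Schwarz twice, first turning the $L^1$ norm into $L^2$ on the bounded interval and then integrating in $s$:
\[
	\|\tilde u^\e(\cdot,t)-\tilde u_0^\e\|_{L^1}\leq\sqrt{b-a}\,\sqrt{t}\left(\int_0^t\|\tilde u^\e_s\|_{L^2}^2\,ds\right)^{1/2}\leq C''\sqrt{t}\,\e^{1/2}\exp(-Ap/4\e).
\]
For $t\leq m\exp(Ap/2\e)$ the factor $\sqrt t$ produces $\exp(Ap/4\e)$, which cancels the exponential above and leaves $C''\sqrt m\,\e^{1/2}\to0$; this cancellation of the exponentially long time against the exponentially small energy gap is the crux. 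Together with $\|\tilde u_0^\e-\tilde v\|_{L^1}\to0$, this shows $\|\tilde u^\e(\cdot,t)-\tilde v\|_{L^1}$ is eventually $<\delta$ on $[0,\min\{T_\e,m\exp(Ap/2\e)\}]$; a standard continuation argument then forces $T_\e\geq m\exp(Ap/2\e)$, so the bound holds on the whole interval and yields \eqref{eq:limit}.

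To upgrade to the $L^1$ convergence of $u^\e$ itself when $p\geq2$, I would interpolate between the $H^{-1}$-type control just obtained and the $W^{1,p}$ bound furnished by the energy. Since $E_\e$ is non-increasing, \eqref{eq:energy} gives $\|u^\e_x(\cdot,t)\|_{L^p}^p\leq p\,\e^{1-p}E_\e[u^\e](0)\leq C\e^{1-p}$, i.e. $\|u^\e_x(\cdot,t)\|_{L^p}\leq C\e^{(1-p)/p}$ for every $t$. The function $g:=u^\e(\cdot,t)-u_0^\e$ is continuous (both terms lie in $H^1$, so no jumps appear), and I would apply to it the elementary interpolation inequality on the fixed interval,
\[
	\|g\|_{L^1}\leq C\,\|\tilde g\|_{L^1}^{1/2}\,\|g_x\|_{L^p}^{1/2},
\]
obtained by splitting $(a,b)$ into $M$ pieces, bounding $g$ on each interval by its mean plus $\ell^{1-1/p}\|g_x\|_{L^p}$ through a Poincar\'e inequality, and optimizing over $M$. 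Substituting the bound on $\|\tilde g\|_{L^1}$ from the previous step and $\|g_x\|_{L^p}\leq C\e^{(1-p)/p}$ gives
\[
	\|u^\e(\cdot,t)-u_0^\e\|_{L^1}\leq C\,t^{1/4}\,\e^{1/4}\exp(-Ap/8\e)\,\e^{(1-p)/2p}.
\]
For $t\leq m\e^\eta\exp(Ap/2\e)$ the factor $t^{1/4}$ contributes $\e^{\eta/4}\exp(Ap/8\e)$, cancelling the exponential and leaving the power $\e^{(\eta-1)/4+1/(2p)}$, which tends to $0$ precisely when $\eta>1-2/p$. Adding $\|u_0^\e-v\|_{L^1}\to0$ then yields \eqref{eq:limit2}.

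The computation is routine once the two ingredients are in place, so the genuine work sits in two spots. First, the continuation argument must be arranged so that \eqref{eq:lower} is invoked only where $\|\tilde u^\e-\tilde v\|_{L^1}\leq\delta$, and then closed by the smallness just derived; this tacitly uses the a priori regularity $u^\e\in C([0,T],H^3)$ required by Lemma \ref{lem:energy-dec}. Second, and more delicate, is the interpolation inequality together with the bookkeeping of exponents for \eqref{eq:limit2}: one must track how the energy-induced growth $\e^{(1-p)/p}$ of $\|u^\e_x\|_{L^p}$ is offset by the smallness of $\|\tilde g\|_{L^1}$ and by the shortened time window $\e^\eta$. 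It is exactly this balance that singles out the threshold $\eta=1-2/p$ and explains the restriction $p\geq2$ (which guarantees $1-2/p\geq0$, keeping the admissible window and the time scale below those of \eqref{eq:limit}). I expect this exponent balance to be the main obstacle.
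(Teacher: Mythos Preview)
Your strategy matches the paper's exactly: feed Proposition~\ref{prop:lower} into the dissipation estimate \eqref{eq:energy-est}, run a continuation argument, and then interpolate for \eqref{eq:limit2}. The continuation argument you outline is a legitimate variant of the paper's (they control $\int_0^{T_\e}\|\tilde u^\e_t\|_{L^1}\,dt$ rather than $\|\tilde u^\e-\tilde v\|_{L^1}$ directly, but the two are interchangeable), and your exponent arithmetic for \eqref{eq:limit2} lands on the correct threshold $\eta>1-2/p$.

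The one soft spot is the interpolation inequality you invoke, $\|g\|_{L^1}\leq C\|\tilde g\|_{L^1}^{1/2}\|g_x\|_{L^p}^{1/2}$. The partition--Poincar\'e--optimize sketch does not quite close: after splitting into $M$ pieces you are left with $\sum_I|\tilde g(b_I)-\tilde g(a_I)|$, and bounding these point values in terms of $\|\tilde g\|_{L^1}$ reintroduces $\|g\|_{L^1}$ on the right-hand side. The paper sidesteps this by a single integration by parts,
\[
	\|g\|_{L^2}^2=\int_a^b\tilde g_x\,g\,dx=-\int_a^b\tilde g\,g_x\,dx\leq\|\tilde g\|_{L^2}\|g_x\|_{L^2}\leq C\|\tilde g\|_{L^2}\|g_x\|_{L^p},
\]
the last step using $p\geq2$ on the bounded interval. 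This gives $\|g\|_{L^1}\leq C\|\tilde g\|_{L^2}^{1/2}\|g_x\|_{L^p}^{1/2}$ with $L^2$ rather than $L^1$ on $\tilde g$. Since your Cauchy--Schwarz bound on $\|\tilde u^\e(\cdot,t)-\tilde u_0^\e\|_{L^1}$ is in fact an $L^2$ bound passed through $\|\cdot\|_{L^1}\leq\sqrt{b-a}\,\|\cdot\|_{L^2}$, you already have the $L^2$ control needed, and the final exponent $\e^{(\eta p-p+2)/(4p)}$ is exactly what the paper obtains. So the argument is correct once you replace your partition sketch with this one-line identity.
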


As it was already mentioned, thanks to Lemma \ref{lem:energy-dec} and Proposition \ref{prop:lower}, 
we can apply the same strategy of \cite{Bron-Hilh,Grant} to prove Theorem \ref{thm:main}.
The first step of the proof is the following bound on the $L^2$--norm of the time derivative of the solution $u_t^\varepsilon$.
\begin{prop}\label{prop:L2-norm}
Under the same assumptions of Theorem \ref{thm:main}, there exist positive constants $\varepsilon_0, C_1, C_2>0$ (independent on $\varepsilon$) such that
\begin{equation}\label{L2-norm}
	\int_0^{C_1\varepsilon^{-1}\exp(Ap/2\varepsilon)}\|\tilde u_t^\varepsilon\|^2_{{}_{L^2}}dt\leq C_2\varepsilon\exp(-Ap/2\varepsilon),
	\end{equation}
for all $\varepsilon\in(0,\varepsilon_0)$.
\end{prop}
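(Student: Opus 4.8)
The plan is to combine the energy dissipation identity of Lemma \ref{lem:energy-dec} with the sharp upper and lower energy bounds. From \eqref{eq:energy-est} we have
$$d\e^{-1}\int_0^t\|\tilde u^\e_t(\cdot,s)\|^2_{{}_{L^2}}\,ds\leq E_\e[u^\e](0)-E_\e[u^\e](t),$$
so it suffices to control the right-hand side. The initial datum satisfies \eqref{eq:energy-ini}, hence $E_\e[u^\e](0)\leq Nc_p+C\exp(-Ap/2\e)$; on the other hand, Proposition \ref{prop:lower} provides the matching lower bound $E_\e[u^\e](t)\geq Nc_p-C\exp(-Ap/2\e)$, \emph{provided} that $\|\tilde u^\e(\cdot,t)-\tilde v\|_{{}_{L^1}}\leq\delta$. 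Subtracting would then give $E_\e[u^\e](0)-E_\e[u^\e](t)\leq 2C\exp(-Ap/2\e)$, and after integration the desired estimate with $C_2:=2C/d$. The difficulty is that the lower bound is available only as long as the solution stays $L^1$-close to $\tilde v$, which is precisely what must be established on the whole interval $[0,T_\e]$.

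To resolve this circularity I would argue by continuation. Since $u^\e\in C([0,T],H^3)$, the map $t\mapsto\|\tilde u^\e(\cdot,t)-\tilde v\|_{{}_{L^1}}$ is continuous, and by \eqref{eq:ass-u0} (in the antiderivative form, using $\|\tilde u^\e-\tilde v\|_{{}_{L^1}}\leq(b-a)\|u^\e-v\|_{{}_{L^1}}$) its value at $t=0$ tends to $0$ as $\e\to0$; I fix $\e$ so small that it is below $\delta/2$. Define the exit time $\hat t:=\sup\{t\in[0,T]:\|\tilde u^\e(\cdot,s)-\tilde v\|_{{}_{L^1}}\leq\delta\ \text{for all}\ s\in[0,t]\}$. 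On $[0,\hat t]$ the lower bound of Proposition \ref{prop:lower} applies, so the computation above yields
$$\int_0^{\hat t}\|\tilde u^\e_t(\cdot,s)\|^2_{{}_{L^2}}\,ds\leq \frac{2C}{d}\,\e\exp(-Ap/2\e)=C_2\,\e\exp(-Ap/2\e).$$

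It remains to show $\hat t\geq T_\e:=C_1\e^{-1}\exp(Ap/2\e)$ for a suitably small $C_1$. Writing $\tilde u^\e(\cdot,\hat t)-\tilde v=(\tilde u^\e(\cdot,0)-\tilde v)+\int_0^{\hat t}\tilde u^\e_t\,ds$ and applying Cauchy--Schwarz first in space ($\|\cdot\|_{{}_{L^1}}\leq(b-a)^{1/2}\|\cdot\|_{{}_{L^2}}$) and then in time, I would estimate
$$\|\tilde u^\e(\cdot,\hat t)-\tilde v\|_{{}_{L^1}}\leq\|\tilde u^\e(\cdot,0)-\tilde v\|_{{}_{L^1}}+(b-a)^{1/2}\,\hat t^{\,1/2}\Big(\int_0^{\hat t}\|\tilde u^\e_t\|^2_{{}_{L^2}}\,ds\Big)^{1/2}.$$
If $\hat t\leq T_\e$, the last term is at most $(b-a)^{1/2}(C_1C_2)^{1/2}$, because the exponential factors in $T_\e$ and in the integral bound cancel exactly in the product $T_\e\cdot C_2\e\exp(-Ap/2\e)=C_1C_2$; choosing $C_1$ so small that $(b-a)^{1/2}(C_1C_2)^{1/2}<\delta/2$ forces $\|\tilde u^\e(\cdot,\hat t)-\tilde v\|_{{}_{L^1}}<\delta$. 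By continuity this contradicts the maximality of $\hat t$ as a first exit time unless $\hat t\geq T_\e$, and the estimate on $[0,\hat t]$ then delivers \eqref{L2-norm}. The main obstacle is exactly this bootstrap, since the lower energy bound and the smallness of $\int\|\tilde u^\e_t\|^2$ each presuppose the other; the precise cancellation of the exponential in $T_\e\cdot C_2\e\exp(-Ap/2\e)$ is what allows the continuation to close on the exponentially long time scale.
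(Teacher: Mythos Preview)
Your proof is correct and follows essentially the same approach as the paper: combine the energy dissipation estimate \eqref{eq:energy-est}, the upper bound \eqref{eq:energy-ini} on the initial energy, and the lower bound of Proposition \ref{prop:lower}, then close the bootstrap via Cauchy--Schwarz in space and time. The only cosmetic difference is the bookkeeping of the continuation argument: the paper tracks the quantity $\int_0^{T}\|\tilde u^\e_t\|_{L^1}\,dt$ and defines $\hat T_\e$ as the first time it equals $\tfrac12\delta$, whereas you track $\|\tilde u^\e(\cdot,t)-\tilde v\|_{L^1}$ and define $\hat t$ as the exit time from the $\delta$-neighborhood; both lead to the same H\"older/Cauchy--Schwarz computation and the same cancellation of the exponential factors.
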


\begin{proof}
Let $\varepsilon_0>0$ so small that for all $\varepsilon\in(0,\varepsilon_0)$, \eqref{eq:ass-u0} holds and 
\begin{equation}\label{1/2delta}
	\|u_0^\varepsilon-v\|_{{}_{L^1}}\leq\frac12\delta(b-a)^{-1},
\end{equation}
where $\delta$ is the constant of Proposition \ref{prop:lower}. 
From \eqref{1/2delta} and the definition of $\tilde u,\tilde v$, it follows that
\begin{equation}\label{1/2delta-int}
	\|\tilde u_0^\varepsilon-\tilde{v}\|_{{}_{L^1}}\leq\int_a^b\left[\int_a^x|u_0^\e(y)-v(y)|\,dy\right]\,dx\leq(b-a)\|u_0^\varepsilon-v\|_{{}_{L^1}}\leq\frac12\delta.
\end{equation}
Let $T_\e>0$; we claim that if
\begin{equation}\label{claim1}
	\int_0^{T_\e}\|\tilde u_t^\varepsilon\|_{{}_{L^1}}dt\leq\frac12\delta,
\end{equation}
then there exists $C>0$ such that	
\begin{equation}\label{claim2}
	E_\varepsilon[u^\varepsilon](T_\e)\geq Nc_p-C\exp(-Ap/2\varepsilon).
\end{equation}
Indeed, by using \eqref{1/2delta-int}, \eqref{claim1} and the triangle inequality we obtain
\begin{equation*}
	\|\tilde u^\varepsilon(\cdot,T_\e)-\tilde v\|_{{}_{L^1}}\leq\|\tilde u^\varepsilon(\cdot,T_\e)-\tilde u_0^\varepsilon\|_{{}_{L^1}}+\|\tilde u_0^\varepsilon-\tilde v\|_{{}_{L^1}}
	\leq\int_0^{T_\e}\|\tilde u_t^\varepsilon\|_{{}_{L^1}}+\frac12\delta\leq\delta,
\end{equation*}
and inequality \eqref{claim2} follows from Proposition \ref{prop:lower}.
Substituting \eqref{eq:energy-ini} and \eqref{claim2} in \eqref{eq:energy-est} yields
\begin{equation}\label{L2-norm-Teps}
	\int_0^{T_\e}\|\tilde u_t^\varepsilon\|^2_{{}_{L^2}}dt\leq C_2\e\exp(-Ap/2\varepsilon).
\end{equation}
It remains to prove that inequality \eqref{claim1} holds for $T_\e\geq C_1\e^{-1}\exp(Ap/2\varepsilon)$.
If 
\begin{equation*}
	\int_0^{+\infty}\|\tilde u_t^\varepsilon\|_{{}_{L^1}}dt\leq\frac12\delta,
\end{equation*}
then there is nothing to prove. 
Otherwise, choose $\hat T_\e$ such that
\begin{equation*}
	\int_0^{\hat T_\e}\|\tilde u_t^\varepsilon\|_{{}_{L^1}}dt=\frac12\delta.
\end{equation*}
Using H\"older's inequality and \eqref{L2-norm-Teps}, we infer
\begin{equation*}
	\frac12\delta\leq[\hat T_\e(b-a)]^{1/2}\biggl(\int_0^{\hat T_\e}\|\tilde u_t^\varepsilon\|^2_{{}_{L^2}}dt\biggr)^{1/2}\leq
	\left[\hat T_\e(b-a)C_2\varepsilon\exp(-Ap/2\varepsilon)\right]^{1/2}.
\end{equation*}
It follows that there exists $C_1>0$ such that
\begin{equation*}
	\hat T_\e\geq C_1\varepsilon^{-1}\exp(Ap/2\varepsilon),
\end{equation*}
and the proof is complete.
\end{proof}
We now have all the tools to prove \eqref{eq:limit}-\eqref{eq:limit2}.
\begin{proof}[Proof of Theorem \ref{thm:main}]
The triangle inequality gives us 
\begin{equation}\label{trianglebar}
	\|\tilde{u}^\varepsilon(\cdot,t)-\tilde v\|_{{}_{L^1}}\leq\|\tilde{u}^\varepsilon(\cdot,t)-\tilde{u}_0^\varepsilon\|_{{}_{L^1}}+\|\tilde{u}_0^\varepsilon-\tilde v\|_{{}_{L^1}},
\end{equation}
for all $t\in[0, m\exp(Ap/2\varepsilon)]$. 
The last term of inequality \eqref{trianglebar} tends to $0$ by assumption \eqref{eq:ass-u0} and by \eqref{1/2delta-int}; 
let us show that also the first one tends to 0 as $\e\to0$.
To this end, taking $\varepsilon$ small enough so that $C_1\varepsilon^{-1}\geq m$, we can apply Proposition \ref{prop:L2-norm} and, by using \eqref{L2-norm}, we deduce that
\begin{align*}
	\|\tilde{u}^\e(\cdot,t)-\tilde{u}^\e_0\|^2_{{}_{L^1}} & \leq (b-a)\|\tilde{u}^\e(\cdot,t)-\tilde{u}^\e_0\|^2_{{}_{L^2}}=(b-a) \left\|\int_0^t\tilde{u}_s^\e(\cdot,s)\,ds\right\|^2_{L^2}\\
	& \leq (b-a) t\int_0^t\|\tilde{u}_t^\e\|^2_{{}_{L^2}}dt \\
	& \leq (b-a) m\exp(Ap/2\varepsilon)\int_0^{m\exp(Ap/2\varepsilon)}\|\tilde{u}_t^\e\|^2_{{}_{L^2}}dt \\
	& \leq C_2(b-a)m\e,
\end{align*}	
for all $t\in[0,m\exp(Ap/2\varepsilon)]$. Hence, \eqref{eq:limit} follows.

Moreover, fix $\eta\in(1-2/p,1)$, and, as before, 
\begin{equation}\label{triangle}
	\|u^\varepsilon(\cdot,t)- v\|_{{}_{L^1}}\leq\|u^\varepsilon(\cdot,t)-u_0^\varepsilon\|_{{}_{L^1}}+\|u_0^\varepsilon-v\|_{{}_{L^1}},
\end{equation}
for all $t\in[0,m\e^\eta\exp(Ap/2\varepsilon)]$. As before,  from \eqref{eq:ass-u0}, we only need to control the first term on the right hand side of  \eqref{triangle}.
Taking  $\varepsilon$ small enough so that $C_1\varepsilon^{-1}\geq m\e^\eta$,  we thus obtain 
\begin{equation}\label{eq:bar}
	\|\tilde{u}^\e(\cdot,t)-\tilde{u}^\e_0\|^2_{{}_{L^2}}=\left\|\int_0^t\tilde{u}_s^\e(\cdot,s)\,ds\right\|^2_{L^2}\leq t\int_0^t\|\tilde{u}_t^\e\|^2_{{}_{L^2}}dt\leq C_2m\e^{1+\eta},
\end{equation}	
for all $t\in[0,m\e^\eta\exp(Ap/2\varepsilon)]$.

Denote by $w^\e(x,t):=u^\e(x,t)-u^\e_0(x)$ and $\tilde{w}^\e(x,t):=\tilde{u}^\e(x,t)-\tilde{u}^\e_0(x)$.
Integrating by parts and using the boundary conditions \eqref{eq:bound-ut} for $\tilde u$, we infer
\begin{equation}\label{ineq:w}
	\begin{aligned}
		\|w^\e(\cdot,t)\|^2_{{}_{L^2}}&=\int_a^b\tilde{w}_x^\e(x,t)w^\e(x,t)\,dx=-\int_a^b\tilde{w}^\e(x,t)w^\e_x(x,t)\,dx\\
		& \leq\|\tilde{w}^\e(\cdot,t)\|_{{}_{L^2}}\|w^\e_x(\cdot,t)\|_{{}_{L^2}}.
	\end{aligned}
\end{equation}
In order to estimate the last term of \eqref{ineq:w}, we use \eqref{eq:bar}, the assumption \eqref{eq:energy-ini} and \eqref{eq:energy-est}.
Indeed, since $\int_a^b|u^\e_x(x,t)|^pdx\leq C\varepsilon^{1-p}$ for all $t\geq0$, if $p\geq2$, then 
\begin{equation*}
	\|w^\e_x(\cdot,t)\|_{{}_{L^2}}\leq C\|w^\e_x(\cdot,t)\|_{{}_{L^p}}\leq C\e^{\frac{1-p}{p}},
\end{equation*}
and we end up with
\begin{equation*}
	\|u^\e(\cdot,t)-u^\e_0\|^2_{{}_{L^2}}\leq C\e^{\frac{1+\eta}2+\frac{1-p}{p}}=C\e^{\frac{\eta p-p+2}{2p}},
\end{equation*}
for all $t\in[0,m\e^\eta\exp(Ap/2\varepsilon)]$.
Notice that, since $\eta\in(1-2/p,1)$ the exponent of $\e$ is strictly positive.
It finally follows that
\begin{equation*}
	\sup_{0\leq t\leq m\e^\eta\exp(Ap/2\varepsilon)}\|u^\varepsilon(\cdot,t)-u_0^\varepsilon\|_{{}_{L^1}}\leq C\e^{\frac{\eta p-p+2}{4p}}.
\end{equation*}
Combining the latter estimate, \eqref{triangle} and by passing to the limit as $\varepsilon\to0$, we obtain \eqref{eq:limit2}.
\end{proof}

Theorem \ref{thm:main} provides sufficient conditions for the existence of a metastable state for equation \eqref{eq:CH-model}
and shows its persistence for (at least) an exponentially long time.
It is also of interest to notice that the bigger is $p$, the longer the time of such a persistence. 
Also, we recover exactly the classical result when $p=2$ (cfr. \cite{Bates-Xun1,Bates-Xun2}).

We now consider the case $\theta>p$, and we prove the algebraic slow motion of the solutions.
As done before, we fix a piecewise constant function $v$ with $N$ transitions as in \eqref{vstruct} and we assume that the initial datum $u^\e_0$ satisfies
\begin{equation}\label{eq:ass-u0-deg}
	\lim_{\varepsilon\rightarrow 0} \|u^\varepsilon_0-v\|_{{}_{L^1}}=0,
\end{equation}
and that there exist $C>0$ and $m\in\mathbb N$ such that
\begin{equation}\label{eq:energy-ini-deg}
	E_\varepsilon[u^\varepsilon_0]\leq Nc_p+C\e^{k_m},
\end{equation}
for any $\varepsilon\ll1$, where the energy $E_\e$ and the positive constants $c_ p, k_m$ are defined in \eqref{eq:energy}, \eqref{eq:c_p} and \eqref{eq:exp_alg}, respectively.

Our second result is the following theorem.
\begin{thm}[algebraic slow motion in the degenerate or supercritical case $\theta>p$]\label{thm:main2}
Let $u^\varepsilon$ be the solution to \eqref{eq:CH-model}-\eqref{eq:F}-\eqref{eq:initial}-\eqref{eq:Neu-p} with $\theta>p>1$,  
$D\in C^1$ strictly positive and with initial datum $u_0^{\varepsilon}$ satisfying \eqref{eq:ass-u0-deg}-\eqref{eq:energy-ini-deg}. 
Then,
\begin{equation}\label{eq:limit-deg}
	\sup_{0\leq t\leq l{\e^{-k_m}}}\|\tilde u^\varepsilon(\cdot,t)-\tilde v\|_{{}_{L^1}}\xrightarrow[\varepsilon\rightarrow0]{}0,
\end{equation}
for any $l>0$.
Moreover, if $p\geq2$, then 
\begin{equation}\label{eq:limit-deg2}
	\sup_{0\leq t\leq l\e^{-k_m+\eta}}\|u^\varepsilon(\cdot,t)-v\|_{{}_{L^1}}\xrightarrow[\varepsilon\rightarrow0]{}0,
\end{equation}
for any $l>0$ and any $\eta\in(1-2/p,1)$.
\end{thm}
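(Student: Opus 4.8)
The plan is to reproduce, with algebraic weights in place of the exponential ones, the scheme used for Theorem \ref{thm:main}. The engine is again the energy dissipation inequality \eqref{eq:energy-est} of Lemma \ref{lem:energy-dec}, which holds verbatim here since it is insensitive to the relation between $\theta$ and $p$; the only structural change is that the lower bound with exponentially small remainder (Proposition \ref{prop:lower}) is replaced by the algebraic lower bound of Proposition \ref{prop:lower_deg}. Accordingly, I would first establish the degenerate counterpart of Proposition \ref{prop:L2-norm}, namely that there exist constants $C_1,C_2>0$ such that
\begin{equation*}
	\int_0^{C_1\e^{-1-k_m}}\|\tilde u_t^\e\|^2_{{}_{L^2}}\,dt\leq C_2\,\e^{1+k_m},
\end{equation*}
for $\e$ small, where $k_m$ is the exponent appearing in the initial energy cap \eqref{eq:energy-ini-deg}.

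To prove this bound I would argue exactly as in Proposition \ref{prop:L2-norm}. Fix the threshold $\tfrac12\delta_m$, with $\delta_m$ the constant furnished by Proposition \ref{prop:lower_deg}, and convert the initial closeness \eqref{eq:ass-u0-deg} into $\|\tilde u_0^\e-\tilde v\|_{{}_{L^1}}\leq\tfrac12\delta_m$ via $\|\tilde u_0^\e-\tilde v\|_{{}_{L^1}}\leq(b-a)\|u_0^\e-v\|_{{}_{L^1}}$. Then, whenever $\int_0^{T_\e}\|\tilde u_t^\e\|_{{}_{L^1}}\,dt\leq\tfrac12\delta_m$, the triangle inequality gives $\|\tilde u^\e(\cdot,T_\e)-\tilde v\|_{{}_{L^1}}\leq\delta_m$, so the proximity hypothesis of Proposition \ref{prop:lower_deg} holds at $T_\e$. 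The second hypothesis, the energy cap $E_\e[u^\e(\cdot,T_\e)]\leq Nc_p+C\e^{k_m}$, comes for free from the monotonicity of $E_\e$ in Lemma \ref{lem:energy-dec} together with \eqref{eq:energy-ini-deg} — this is the one genuinely new ingredient with respect to the critical case, where no upper bound on the energy was needed to apply the lower bound. Proposition \ref{prop:lower_deg} then yields $E_\e[u^\e(\cdot,T_\e)]\geq Nc_p-C_m\e^{k_{m+1}}$, and inserting this together with \eqref{eq:energy-ini-deg} into \eqref{eq:energy-est} produces $\int_0^{T_\e}\|\tilde u_t^\e\|^2_{{}_{L^2}}\,dt\leq C_2\e^{1+k_m}$, the term $\e^{k_m}$ dominating since $k_{m+1}>k_m$. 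A H\"older continuation argument identical to the one closing Proposition \ref{prop:L2-norm} then shows that the standing assumption $\int_0^{T_\e}\|\tilde u_t^\e\|_{{}_{L^1}}\,dt\leq\tfrac12\delta_m$ is self-consistent for all $T_\e\leq C_1\e^{-1-k_m}$.

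With the $L^2$-bound in hand, \eqref{eq:limit-deg} is immediate: writing $\|\tilde u^\e(\cdot,t)-\tilde v\|_{{}_{L^1}}\leq\|\tilde u^\e(\cdot,t)-\tilde u_0^\e\|_{{}_{L^1}}+\|\tilde u_0^\e-\tilde v\|_{{}_{L^1}}$ and estimating the first term by Cauchy--Schwarz in time,
\begin{equation*}
	\|\tilde u^\e(\cdot,t)-\tilde u_0^\e\|^2_{{}_{L^2}}\leq t\int_0^t\|\tilde u_s^\e\|^2_{{}_{L^2}}\,ds\leq l\,\e^{-k_m}\,C_2\,\e^{1+k_m}=C_2\,l\,\e,
\end{equation*}
for $t\leq l\e^{-k_m}$ (note $l\e^{-k_m}\leq C_1\e^{-1-k_m}$ for small $\e$, so the previous bound applies), whence $\|\tilde u^\e(\cdot,t)-\tilde u_0^\e\|_{{}_{L^1}}\lesssim\e^{1/2}\to0$. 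For \eqref{eq:limit-deg2} I would copy the $p\geq2$ interpolation of Theorem \ref{thm:main}: from \eqref{ineq:w}, $\|u^\e-u_0^\e\|^2_{{}_{L^2}}\leq\|\tilde u^\e-\tilde u_0^\e\|_{{}_{L^2}}\|u^\e_x-u^\e_{0,x}\|_{{}_{L^2}}$, bound $\|u^\e_x\|_{{}_{L^2}}\leq C\|u^\e_x\|_{{}_{L^p}}\leq C\e^{(1-p)/p}$ using the energy cap, and insert the $L^2$-bound at the shorter time scale $t\leq l\e^{-k_m+\eta}$; the resulting exponent $(\eta p-p+2)/(2p)$ is strictly positive precisely when $\eta>1-2/p$.

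The main delicate point is the bootstrap keeping the hypotheses of Proposition \ref{prop:lower_deg} valid along the flow: unlike the critical case, the proximity condition and the energy cap must be propagated simultaneously, and the exponent $k_{m+1}$ produced by the lower bound has to be reconciled with the exponent $k_m$ fed in, which is exactly why the scheme is pinned to the self-improving sequence \eqref{eq:exp_alg} and why the reachable time scale is governed by $k_m$. A second subtlety is that the dissipation controls $\tilde u_t$ while Proposition \ref{prop:lower_deg} is phrased through $u$; as in the modification leading to Proposition \ref{prop:lower}, this is harmless because the lower bound only uses the \emph{integrated} closeness of $u$ to $v$ (the two-point extraction), hence applies equally when $\|\tilde u-\tilde v\|_{{}_{L^1}}$ is small. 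This same mismatch is what confines the unintegrated statement \eqref{eq:limit-deg2} to $p\geq2$, the only regime in which the interpolation upgrading $\tilde u$-control to $u$-control closes.
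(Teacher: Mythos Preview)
Your overall scheme matches the paper's: dissipation inequality \eqref{eq:energy-est}, an algebraic $L^2$-in-time bound on $\tilde u_t$ (the paper's Proposition \ref{prop:L2-norm-alg}), then the same triangle-inequality and interpolation steps as in Theorem \ref{thm:main}. The final paragraphs deriving \eqref{eq:limit-deg} and \eqref{eq:limit-deg2} from that $L^2$ bound are essentially identical to the paper's.

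There is, however, a real gap at the point you yourself flag as a ``subtlety''. Proposition \ref{prop:lower_deg} is stated with the hypothesis $\|u-v\|_{{}_{L^1}}\leq\delta_m$, not $\|\tilde u-\tilde v\|_{{}_{L^1}}\leq\delta_m$, and its proof lives in an external reference. You assert by analogy with Proposition \ref{prop:lower} that the degenerate lower bound ``only uses the two-point extraction'' and therefore upgrades for free to the antiderivative hypothesis. The paper does \emph{not} take this shortcut. Instead, the bulk of Proposition \ref{prop:L2-norm-alg} is devoted to showing that, under the standing assumption on $\int_0^{T_\e}\|\tilde u_t^\e\|_{{}_{L^1}}$ together with the energy cap, one actually has $\|u^\e(\cdot,T_\e)-v\|_{{}_{L^1}}\to0$, so that Proposition \ref{prop:lower_deg} applies \emph{as stated}. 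This is done via a BV--compactness argument: one shows that $\Psi(u^\e(\cdot,T_\e))$, with $\Psi(z):=\int_0^z F(s)^{(p-1)/p}\,ds$, is uniformly bounded in $BV(a,b)$ (using the energy cap for the variation and a growth comparison $|\Psi(z)|\leq c_1+c_2|1-z^2|^\theta$ for the $L^1$ bound), extracts an $L^1$-convergent subsequence, identifies the limit as $v$ using \eqref{eq:tilde u_hat T} and Fatou, and finally transfers convergence back from $\Psi(u^\e)$ to $u^\e$ via the strict monotonicity and growth of $\Psi$. To make the compactness argument close, the paper uses a vanishing threshold $\e^{\alpha}$ (with $\alpha=\tfrac12(1-\mu)$) rather than your fixed $\tfrac12\delta_m$, which is why its time scale is $\e^{-(k_m+\mu)}$ for $\mu\in(0,1)$ rather than your $\e^{-(1+k_m)}$; either suffices for \eqref{eq:limit-deg}.

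If your claim that the proof of Proposition \ref{prop:lower_deg} goes through verbatim under $\|\tilde u-\tilde v\|_{{}_{L^1}}\leq\delta$ is correct, your argument would indeed be shorter. But that requires reopening the proof in \cite{DCDS}, and the authors evidently chose not to: they modified only the critical-case lower bound (Proposition \ref{prop:lower}) and handled the supercritical case by the compactness detour above. As written, your proposal leaves this step unjustified.
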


The strategy to prove Theorem \ref{thm:main2} is the same of Theorem \ref{thm:main}, but with the crucial difference that we need to use Proposition \ref{prop:lower_deg};
to do this, we need to verify assumption \eqref{|w-v|_L^1<delta_l} at a large time $T_\e>0$ and this complicates the computations, as we show in the proof
of the following instrumental result, that plays the same role of Proposition \ref{prop:L2-norm}.

\begin{prop}\label{prop:L2-norm-alg} 
Let $\mu\in(0,1)$. 
Under the same assumptions of Theorem \ref{thm:main2}, there exist positive constants $\varepsilon_0, C_1, C_2>0$ (independent on $\varepsilon$) such that
\begin{equation}\label{L2-norm-alg}
	\int_0^{C_1\varepsilon^{-(k_m+\mu)}}\|\tilde u_t^\varepsilon\|^2_{{}_{L^2}}dt\leq C_2\varepsilon^{k_m+1},
	\end{equation}
for all $\varepsilon\in(0,\varepsilon_0)$.
\end{prop}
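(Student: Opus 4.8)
The plan is to follow the blueprint of Proposition \ref{prop:L2-norm}, replacing the exponentially small lower bound of Proposition \ref{prop:lower} by the iterative algebraic one of Proposition \ref{prop:lower_deg}. Combining the energy identity \eqref{eq:energy-est} of Lemma \ref{lem:energy-dec} with the initial bound \eqref{eq:energy-ini-deg}, one has for every $t$
\begin{equation*}
	d\,\varepsilon^{-1}\int_0^t\|\tilde u_t^\varepsilon\|^2_{{}_{L^2}}\,ds\leq E_\varepsilon[u^\varepsilon](0)-E_\varepsilon[u^\varepsilon](t)\leq Nc_p+C\varepsilon^{k_m}-E_\varepsilon[u^\varepsilon](t).
\end{equation*}
Hence it suffices to bound $E_\varepsilon[u^\varepsilon](t)$ from below by $Nc_p-C_m\varepsilon^{k_{m+1}}$ through Proposition \ref{prop:lower_deg}: since the sequence \eqref{eq:exp_alg} is increasing, so that $k_{m+1}>k_m$ (Remark \ref{sharp}), the two error terms collapse to $C\varepsilon^{k_m}$ and yield exactly \eqref{L2-norm-alg}. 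The energy hypothesis \eqref{E_p(w)<Nc0+eps^l} of Proposition \ref{prop:lower_deg} holds for all $t$, because $E_\varepsilon$ is non-increasing (Lemma \ref{lem:energy-dec}) and the datum satisfies \eqref{eq:energy-ini-deg}; thus the whole argument reduces to verifying the $L^1$ hypothesis \eqref{|w-v|_L^1<delta_l}, namely $\|u^\varepsilon(\cdot,t)-v\|_{{}_{L^1}}\leq\delta_m$, for $t$ up to $T_\varepsilon:=C_1\varepsilon^{-(k_m+\mu)}$.

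I would run this as a continuity/bootstrap argument: let $T_\varepsilon$ be the largest time on which \eqref{|w-v|_L^1<delta_l} holds, so that the bound $\int_0^t\|\tilde u_t^\varepsilon\|^2_{{}_{L^2}}\,ds\leq C\varepsilon^{k_m+1}$ is valid on $[0,T_\varepsilon]$, and then show $T_\varepsilon\geq C_1\varepsilon^{-(k_m+\mu)}$ by contradiction. The quantity naturally controlled by the energy dissipation is the antiderivative: by the Cauchy--Schwarz inequality in time, the $L^2$ bound just obtained, and assumption \eqref{eq:ass-u0-deg},
\begin{equation*}
	\|\tilde u^\varepsilon(\cdot,t)-\tilde v\|_{{}_{L^1}}\leq\sqrt{(b-a)\,t}\left(\int_0^t\|\tilde u_t^\varepsilon\|^2_{{}_{L^2}}\,ds\right)^{1/2}+\|\tilde u_0^\varepsilon-\tilde v\|_{{}_{L^1}}\leq C\varepsilon^{(1-\mu)/2}+o(1),
\end{equation*}
which tends to $0$ precisely because $\mu<1$; this is the role of the restriction $\mu\in(0,1)$.

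The main obstacle is then to upgrade this $\tilde u$-closeness into the $u$-closeness \eqref{|w-v|_L^1<delta_l} demanded by Proposition \ref{prop:lower_deg}. The interpolation used in Theorem \ref{thm:main} (writing $\|w\|^2_{{}_{L^2}}\leq\|\tilde w\|_{{}_{L^2}}\|w_x\|_{{}_{L^2}}$ and controlling $\|w_x\|_{{}_{L^2}}$ by the $L^p$ energy bound) requires $p\geq2$ and, more importantly, only closes the bootstrap up to a time strictly shorter than $\varepsilon^{-k_m}$, so it cannot reach $T_\varepsilon$. Instead I would exploit the energy bound directly: since $E_\varepsilon[u^\varepsilon(\cdot,t)]\leq Nc_p+C\varepsilon^{k_m}$ forces $\int_a^b F(u^\varepsilon)\,dx\leq C\varepsilon$, with $F$ as in \eqref{eq:F} vanishing only at $\pm1$, the solution stays within any prescribed tolerance of the wells $\pm1$ off a set of measure $O(\varepsilon)$, while the same energy bound caps the number of transitions at $N$ through the minimal cost \eqref{eq:c_p}; the $\tilde u$-closeness established above then fixes the sign pattern and forces it to coincide with that of $v$. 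Combining these facts gives $\|u^\varepsilon(\cdot,t)-v\|_{{}_{L^1}}\to0$, hence $\leq\delta_m$, uniformly on $[0,T_\varepsilon]$, contradicting the maximality of any putative $T_\varepsilon<C_1\varepsilon^{-(k_m+\mu)}$ and completing the proof. The delicate point — and the reason the computations genuinely complicate with respect to the critical case of Proposition \ref{prop:L2-norm} — is exactly this passage from the antiderivative back to $u$ itself, which must be performed for all $p>1$ and uniformly over the long time window.
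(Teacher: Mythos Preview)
Your blueprint coincides with the paper's: combine \eqref{eq:energy-est}, the upper bound \eqref{eq:energy-ini-deg}, and the algebraic lower bound of Proposition \ref{prop:lower_deg} at a late time $T_\varepsilon$, then close by a H\"older/Cauchy--Schwarz argument in time that produces the exponent $(1-\mu)/2$ and explains the restriction $\mu\in(0,1)$. You also correctly isolate the one genuinely new difficulty with respect to Proposition \ref{prop:L2-norm}: Proposition \ref{prop:lower_deg} requires the $L^1$ closeness \eqref{|w-v|_L^1<delta_l} of $u^\varepsilon(\cdot,T_\varepsilon)$ to $v$, not merely of the antiderivative, and the interpolation trick used in the proof of Theorem \ref{thm:main} is too weak (and limited to $p\geq2$) to deliver this.

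The only substantive difference lies in how this upgrade from $\tilde u$--closeness to $u$--closeness is carried out. You sketch a direct quantitative argument: $\int_a^bF(u^\varepsilon)\,dx\leq C\varepsilon$ forces $u^\varepsilon$ to lie near $\pm1$ off a set of small measure, the minimal cost $c_p$ per transition caps the number of sign changes at $N$, and the antiderivative closeness then pins down which intervals carry which sign. The paper instead runs the classical Modica--Mortola compactness: setting $\Psi(z)=\int_0^zF(s)^{(p-1)/p}\,ds$, the energy bound \eqref{eq:crucial} together with the Young inequality underlying \eqref{eq:c_p} shows that $\Psi(u^\varepsilon(\cdot,T_\varepsilon))$ is uniformly bounded in $BV(a,b)$; one extracts an $L^1$--convergent subsequence, uses Fatou with $\int F(u^\varepsilon)\leq C\varepsilon$ to see that the limit is $\{\pm1\}$--valued, and finally invokes the weak estimate \eqref{eq:keyidea} and the established $\tilde u$--closeness to identify the limit with $v$. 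Your heuristic is morally the same mechanism---the $BV$ bound on $\Psi(u^\varepsilon)$ is precisely what encodes ``at most $N$ transitions''---but the compactness route packages the tail control (large $|u^\varepsilon|$) and the sign identification in one clean pass, without having to quantify the geometry of good and bad sets. Either argument works for all $p>1$, and the paper also frames the continuation step through $\int_0^{\hat T_\varepsilon}\|\tilde u_t^\varepsilon\|_{L^1}\,dt=\varepsilon^{(1-\mu)/2}$ rather than through the maximal time for \eqref{|w-v|_L^1<delta_l}, but this is a cosmetic difference.
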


\begin{proof}
First of all, notice that proceeding as in \eqref{1/2delta-int}, we get
\begin{equation*}
	\|\tilde u_0^\varepsilon-\tilde{v}\|_{{}_{L^1}}\leq(b-a)\|u_0^\varepsilon-v\|_{{}_{L^1}},
\end{equation*}
and, as a consequence, the assumption \eqref{eq:ass-u0-deg} ensures 
\begin{equation}\label{eq:tilde-u-v}
	\lim_{\e\to0}\|\tilde u_0^\varepsilon-\tilde{v}\|_{{}_{L^1}}=0.
\end{equation}
Similarly to the proof of Proposition \ref{prop:L2-norm}, we proceed in two steps: 
first, we claim that if $T_\e>0$ satisfies
\begin{equation}\label{eq:claim1-alg}
	\int_0^{T_\e}\|\tilde u_t^\varepsilon\|_{{}_{L^1}}dt\leq\e^{\alpha}, \qquad \qquad \alpha=\frac12(1-\mu)>0,
\end{equation}
then, there exists $C>0$ such that	
\begin{equation}\label{eq:L2-norm-T-alg}
	\int_0^{T_\e}\|\tilde u_t^\varepsilon\|^2_{{}_{L^2}}dt\leq C\e^{k_m+1}.
\end{equation}
Then, we prove that \eqref{eq:L2-norm-T-alg} holds true for $T_\e\geq C_1\e^{-(km+\mu)}$.
The fact that \eqref{eq:claim1-alg} implies \eqref{eq:L2-norm-T-alg} is a consequence of the energy estimate \eqref{eq:energy-est} and the lower bound \eqref{eq:lower_alg}.
Thus, let us prove that \eqref{eq:claim1-alg} implies that the solution at time $T_\e$, i.e. $u^\e(\cdot,T_\e)$, satisfies the assumptions of Proposition \ref{prop:lower_deg}. 
Since the energy does not increase in time along the solution, see \eqref{eq:energy-est}, and the initial datum satisfies \eqref{eq:energy-ini-deg},
the solution $u^\e(\cdot,t)$ verifies assumption \eqref{E_p(w)<Nc0+eps^l} for any time $t\geq0$ and we only have to prove that \eqref{|w-v|_L^1<delta_l} holds true at time $t=T_\e$.
To do this, we first notice that, since
\begin{equation*}
	\|\tilde u^\varepsilon(\cdot,T_\e)-\tilde v\|_{{}_{L^1}}\leq\|\tilde u^\varepsilon(\cdot,T_\e)-\tilde u_0^\varepsilon\|_{{}_{L^1}}+\|\tilde u_0^\varepsilon-\tilde v\|_{{}_{L^1}}
	\leq\int_0^{T_\e}\|\tilde u_t^\varepsilon\|_{{}_{L^1}}+\|\tilde u_0^\varepsilon-\tilde v\|_{{}_{L^1}},
\end{equation*}
it follows from \eqref{eq:tilde-u-v} and \eqref{eq:claim1-alg} that
\begin{equation}\label{eq:tilde u_hat T}
	\lim_{\e\to0}\|\tilde u^\varepsilon(\cdot,T_\e)-\tilde v\|_{{}_{L^1}}=0.
\end{equation}
Next, Lemma \ref{lem:energy-dec} and assumption \eqref{eq:energy-ini-deg} give $E_\e[u^\e](t)\leq C$, for any $t\geq0$, that is
\begin{equation}\label{eq:crucial}
	\int_a^b\left[\frac{\e^{p-1}|u^\e_x|^p}{p}+\frac{F(u^\e)}\e\right]\,dx\leq C, \qquad \forall\,t\geq0.
\end{equation}
Therefore, we can prove that the function $\Psi(u^\e(\cdot,T_\e))$ is uniformly bounded in $BV(a,b)$, where
\begin{equation*}
	\Psi(z):=\int_{0}^{z} F(s)^{\frac{p-1}{p}}\,ds=\left(\frac{1}{2\theta}\right)^{\frac{p-1}{p}}\int_{0}^{z} |1-s^2|^{\theta-\theta/p}\,ds.
\end{equation*}
Indeed, applying H\"older's inequality and \eqref{eq:crucial}, we obtain
\begin{align*}
	\int_a^b\left|\frac{d}{dx}\Psi(u^\e(x,T_\e))\right|\,dx&=\int_a^b|\Psi'(u^\e(x,T_\e))||u^\e_x(x,T_\e)|\,dx\\
	&=\int_a^b F(u^\e(x,T_\e))^{\frac{p-1}{p}}|u_x^\e(x,T_\e)|\,dx\\
	&\leq\|\e u^\e_x(\cdot,T_\e)\|_{{}_{L^p}}+\|\e^{-1}F(u^\e(\cdot,T_\e))\|^\frac{p}{p-1}_{{}_{L^1}}\leq C.
\end{align*}
Moreover, since $\Psi(z)\approx z^{2\theta-2\theta/p+1}$ for large $z$ and $2\theta-2\theta/p+1<2\theta$, because $\theta>p$, 
we can choose constants $c_1,c_2$ such that
$$|\Psi(z)|\leq c_1+c_2|1-z^2|^{\theta},$$
and, as a consequence, 
$$\int_a^b\left|\Psi(u^\e(x,T_\e))\right|\,dx\leq c_1(b-a)+2c_2\theta\int_a^b F(u^\e(x,T_\e))\,dx\leq c_1(b-a)+2c_2\theta C\e, $$
where we used again \eqref{eq:crucial}.
Therefore, $\Psi(u^\e(\cdot,T_\e))$ is uniformly bounded in $BV(a,b)$ and for a standard compactness result, we can state that
there exists a subsequence $\Psi(u^{\e_j}(\cdot,T_{\e_j}))$ which converges in $L^1(a,b)$ to a function $\Psi^*$, namely 
\begin{equation}\label{eq:Psi*}
	\lim_{\e_j\to0}\|\Psi(u^{\e_j}(\cdot,T_{\e_j}))-\Psi^*\|_{{}_{L^1}}=0.
\end{equation}
Passing to a further subsequence if necessary, we obtain
\begin{equation*}
	\lim_{\e_j\to0}\Psi(u^{\e_j}(x,T_{\e_j}))=\Psi^*(x), \qquad \qquad  \mbox{ a.e. on }\, (a,b).
\end{equation*}
Since $\Psi'$ is strictly positive except at $\pm1$, the function $\Psi$ is monotone and there is
a unique function $v^*$ such that $\Psi(v^*(x))=\Psi^*(x)$, implying
\begin{equation*}
	\lim_{\e_j\to0} u^{\e_j}(x,T_{\e_j})=v^*(x), \qquad \quad \mbox{ a.e. on }\, (a,b).
\end{equation*}
Using the Fatou's Lemma and \eqref{eq:crucial}, we get
\begin{equation*}
	\int_a^b F(v^*(x,T_{\e_j}))\,dxdt\leq\liminf_{\e_j\to0}\int_a^b F(u^{\e_j}(x,T_{\e_j}))\,dxdt=0,
\end{equation*}
so that $v^*$ takes only the values $\pm1$.
The latter property, together with \eqref{eq:tilde u_hat T}, imply that $v^*=v$ a.e. on $(a,b)$ or, equivalently, that
\begin{equation*}
	\lim_{\e_j\to0} u^{\e_j}(x,T_{\e_j})=v(x), \qquad \quad \mbox{ a.e. on }\, (a,b).
\end{equation*}
To prove this, it is sufficient to proceed as in \eqref{eq:keyidea} and use \eqref{eq:tilde u_hat T} to show that
$$\left|\int_a^b(u^{\e_j}(x,T_{\e_j})-v(x))w(x)\,dx\right|\leq\|w'\|_{{}_{L^\infty}}\|\tilde u^\varepsilon(\cdot,T_\e)-\tilde v\|_{{}_{L^1}}\rightarrow0, \qquad \mbox{ as } \, \e\to0,$$
for any test function $w\in C^1_c([a,b])$.
Hence, if we suppose by contradiction that $v\neq v^*$ in some set $I\subset(a,b)$ with $meas(I)>0$, 
we obtain a contradiction because $|u^{\e_j}(\cdot,T_{\e_j})-v|>1$ in $I$.
The last step is to prove that $u^{\e_j}(\cdot,T_{\e_j})$ converges to $v$ in $L^1(a,b)$.
From \eqref{eq:Psi*} and by the  strict monotonicity of $\Psi$, it follows that
\begin{equation*}
	\lim_{\e\to0}\|\Psi(u^{\e}(\cdot,T_{\e}))-\Psi(v)\|_{{}_{L^1}}=0.
\end{equation*}
However, using again that $\Psi(z)\approx z^{2\theta-2\theta/p+1}$ for large $z$, we finally deduce
\begin{equation*}
	\lim_{\e\to0}\|u^{\e}(\cdot,T_{\e})-v\|_{{}_{L^1}}=0.
\end{equation*}
As a consequence, we can choose $\e$ so small that $u^{\e}(\cdot,T_{\e})$ satisfies assumption \eqref{|w-v|_L^1<delta_l}, 
and applying Proposition \ref{prop:lower_deg}, we have 
$$E[u^\e](T_\e)\geq Nc_p-C_m\e^{k_{m+1}}.$$
Furthermore, by using \eqref{eq:energy-est} and \eqref{eq:energy-ini-deg} we obtain
$$\int_0^{T_\e}\|\tilde u_t^\varepsilon\|^2_{{}_{L^2}}dt\leq C\e\left(E_\e[u^\e_0]-E[u^\e](T_\e)\right)\leq C\e^{k_m+1}.$$
We proved that \eqref{eq:claim1-alg} implies \eqref{eq:L2-norm-T-alg}; 
it remains to prove that inequality \eqref{eq:claim1-alg} holds for $T_\e\geq C_1\e^{-(k_m+\mu)}$.
If 
\begin{equation*}
	\int_0^{+\infty}\|\tilde u_t^\varepsilon\|_{{}_{L^1}}dt\leq\e^{\alpha},
\end{equation*}
then there is nothing to prove. 
Otherwise, choose $\hat T_\e$ such that
\begin{equation*}
	\int_0^{\hat T_\e}\|\tilde u_t^\varepsilon\|_{{}_{L^1}}dt=\e^\alpha.
\end{equation*}
Using H\"older's inequality and \eqref{eq:L2-norm-T-alg}, we infer
\begin{equation*}
	\e^\alpha\leq[\hat T_\e(b-a)]^{1/2}\biggl(\int_0^{\hat T}\|\tilde u_t^\varepsilon\|^2_{{}_{L^2}}dt\biggr)^{1/2}\leq
	\left[\hat T_\e(b-a)C_2\varepsilon^{k_m+1}\right]^{1/2}.
\end{equation*}
It follows that there exists $C_1>0$ such that
\begin{equation*}
	\hat T_\e\geq C_1\varepsilon^{2\alpha-k_m-1}.
\end{equation*}
Hence, by using $\alpha=\frac12(1-\mu)$, we deduce that
\begin{equation*}
	\hat T_\e\geq C_1\varepsilon^{-(k_m+\mu)}.
\end{equation*}
and the proof is complete.
\end{proof}

We now have all the tools to prove \eqref{eq:limit-deg}-\eqref{eq:limit-deg2}, proceeding in the same way as we have done for \eqref{eq:limit}-\eqref{eq:limit2}.
\begin{proof}[Proof of Theorem \ref{thm:main2}]
Let us proceed as in the proof of Theorem \ref{thm:main};
triangle inequality gives us \eqref{trianglebar} for all $t\in[0,l\varepsilon^{-k_m}]$, 
and the last term of inequality \eqref{trianglebar} tends to $0$ by assumption \eqref{eq:tilde-u-v}.
Taking $\varepsilon$ small enough so that $C_1\varepsilon^{-\delta}\geq l$, we can apply Proposition \ref{prop:L2-norm-alg} and, by using \eqref{L2-norm-alg}, we deduce
\begin{align*}
	\|\tilde{u}^\e(\cdot,t)-\tilde{u}^\e_0\|^2_{{}_{L^1}} & \leq (b-a)\|\tilde{u}^\e(\cdot,t)-\tilde{u}^\e_0\|^2_{{}_{L^2}}\leq (b-a) t\int_0^t\|\tilde{u}_t^\e\|^2_{{}_{L^2}}dt \\
	& \leq (b-a) l\varepsilon^{-k_m}\int_0^{l\varepsilon^{-k_m}}\|\tilde{u}_t^\e\|^2_{{}_{L^2}}dt \\
	& \leq C_2(b-a)l\e,
\end{align*}	
for all $t\in[0,l\varepsilon^{-k_m}]$ and \eqref{eq:limit} follows.

The proof of \eqref{eq:limit-deg2} is very similar to the one of \eqref{eq:limit2}, for details see the proof of Theorem \ref{thm:main}.
\end{proof}

We conclude this section by recalling that \cite[Section 3.1]{DCDS} ensures the existence of a family of functions $u^\e$ with a transition layer structure,
that is a family of functions satisfying the assumptions \eqref{eq:ass-u0}, \eqref{eq:energy-ini} or \eqref{eq:energy-ini-deg}, 
that we required in the main results, Theorems \ref{thm:main} and \ref{thm:main2}. The proof consists in explicitly constructing the function $u^\e$.

\section{Layer dynamics}\label{LD}
In this last section we aim at showing how the results of Section \ref{sec:slow} can be translated into a result concerning the motion of the transition points $h_1, \dots , h_N$. 
Theorems \ref{thm:main} and \ref{thm:main2} show that solutions to \eqref{eq:CH-model}-\eqref{eq:Neu-p}  arising from initial data with $N$-transition layers maintain such unstable structure for long times (precisely, exponentially long times or algebraically long times if $\theta=p$ or $\theta>p$, respectively). 
These results are tantamount to a precise description of the motion of the transition points $h_1, \dots , h_N$, showing that they move with a very small velocity as $\e\to0^+$.

Following the strategy of \cite{MMAS19, DCDS, Grant}, let us consider $v$ a piecewise constance function as in \eqref{vstruct}, and $u: [a,b] \to \mathbb{R}$ an arbitrary function. 
We define their {\it interfaces} as follows:
\begin{equation*}
	I[v]=\{h_1, \dots , h_N\} \qquad \mbox{and} \qquad I_K[u]=u^{-1}(K),
\end{equation*}
where $K \subset \mathbb{R}\setminus \{ \pm 1\}$ is an arbitrary closed subset. Also, for any $A,B \subset \mathbb{R}$, we define 
\begin{equation*}
	d(A,B):=\max \left \{ \sup_{\alpha \in A} d(\alpha,B), \sup_{\beta \in B} d(\beta,A)\right \},
\end{equation*}
where $d(\beta,A):=\inf \{|\beta-\alpha| \, : \,  \alpha \in A \}$.

The next  lemma  shows that  the distance between the interfaces $I_K[u]$
and $I[v]$ is small, providing some  smallness assumptions on the $L^1$--norm of the difference $ u-v$ and on the energy $E_\e[u].$
The result is purely variational in character and holds true both in the critical ($\theta=p$) and supercritical ($\theta>p$) cases.

\begin{lem}\label{lemma:layermotion}
Let $F$ as in \eqref{eq:F}, $v$ as in \eqref{vstruct} and $r$ such that \eqref{eq:r} holds. 
Given $\delta \in (0,r)$, there exist constants $\hat \delta, \e_0, \Gamma >0$ such that, if $u \in H^1([a,b])$ satisfies
\begin{equation}\label{ipolemma}
	\|\tilde u-\tilde v\|_{L^1} < \hat \delta \qquad \mbox{and} \qquad  E_\e[u] \leq Nc_p + \Gamma,
\end{equation}
then, for any $\e \in (0,\e_0)$ there holds
\begin{equation}\label{tesilemma}
	d(I_K[u],I[v])\leq \frac{\delta}{2}.
\end{equation}
\end{lem}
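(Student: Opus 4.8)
The plan is to split the symmetric distance in \eqref{tesilemma} into its two defining suprema and bound each by $\delta/2$: first, that every point of $I_K[u]$ lies within $\delta/2$ of some $h_i$; second, that every $h_i$ lies within $\delta/2$ of $I_K[u]$. Both parts rest on the $\e$-\emph{independent} transition cost coming from the Young inequality \eqref{eq:c_p}: if an $H^1$ function moves between values $\alpha$ and $\beta$ across an interval $J$, then $\int_J[\frac{\e^{p-1}|u_x|^p}{p}+\frac{F(u)}{\e}]\,dx\ge\Phi(\alpha,\beta)$, where $\Phi(\alpha,\beta):=(\frac{p}{p-1})^{\frac{p-1}{p}}|\int_\alpha^\beta F(s)^{\frac{p-1}{p}}\,ds|$ and $\Phi(-1,1)=c_p$. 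The second ingredient is the localization mechanism already used in the proof of Proposition \ref{prop:lower}: estimate \eqref{eq:keyidea} converts the smallness $\|\tilde u-\tilde v\|_{L^1}<\hat\delta$ into the statement that on any subinterval of fixed length $\hat r$ inside a region where $v$ is constant, $u$ comes within a prescribed $\rho$ of that constant value (exactly as in \eqref{2points}).

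I would fix the constants in the order $\hat r,\rho,\Gamma,\hat\delta$. Take $\hat r\in(0,\delta/4)$, so that, using \eqref{eq:r}, the $\hat r$-collars I attach to the jumps and to any candidate point are pairwise disjoint and sit inside constant-$v$ regions. Let $\eta_K:=2\inf_{k\in K}\Phi(1,k)>0$ (positive since $K$ is closed and $1\notin K$) be the least cost of an excursion that reaches $K$ and returns; by continuity of $\Phi$, using endpoints $\pm(1-\rho)$ instead of $\pm1$ changes the relevant costs by a modulus $\omega(\rho)\to0$. Choose $\rho$ so small that $(N+1)\omega(\rho)\le\eta_K/2$, set $\Gamma:=\eta_K/8$, and finally take $\hat\delta$ small enough (of order $\rho\hat r^2$) that the localization above holds. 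Since every bound is uniform in $\e$, any $\e_0>0$ suffices. For the first supremum, suppose some $x_0$ with $u(x_0)\in K$ had $d(x_0,I[v])>\delta/2$. Localization on the two $\hat r$-collars of $x_0$ produces $\xi_-<x_0<\xi_+$ with $u$ within $\rho$ of the ambient well value, so the energy on $[\xi_-,\xi_+]$ is at least $\eta_K-\omega(\rho)$; localization on both sides of each $h_i$ gives a transition between the wells costing at least $c_p-\omega(\rho)$ on a zone around $h_i$. These $N+1$ zones are disjoint, so summing yields $E_\e[u]\ge Nc_p+\eta_K-(N+1)\omega(\rho)\ge Nc_p+\eta_K/2>Nc_p+\Gamma$, contradicting \eqref{ipolemma}.

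For the second supremum, fix $h_i$ and call its adjacent well values $\pm1$. Localization yields $\zeta_i^-\in(h_i-\hat r,h_i)$ and $\zeta_i^+\in(h_i,h_i+\hat r)$ with $u$ within $\rho$ of $+1$ and $-1$ respectively; since $H^1(a,b)\hookrightarrow C[a,b]$, the intermediate value theorem forces $u$ to attain any value $k_0\in K\cap(-1,1)$ at some point of $(\zeta_i^-,\zeta_i^+)\subset(h_i-\hat r,h_i+\hat r)$, so $I_K[u]$ meets the $\hat r$-neighbourhood of $h_i$, and $\hat r<\delta/2$. Combining the two suprema gives \eqref{tesilemma}.

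The main obstacle is the accounting in the first part: one must be sure that the $N$ near-optimal transition zones and the single extra excursion zone are genuinely disjoint (handled by $\hat r<\delta/4$ together with $d(x_0,I[v])>\delta/2$) and that the surplus $\eta_K/2$ is not eaten by the cumulative $O(\rho)$ deficits of the $N$ transitions, which dictates fixing $\rho$ after $\hat r$ but before $\Gamma$ and $\hat\delta$. A second, essentially bookkeeping, point is that the second part requires $K$ to contain a value in $(-1,1)$ --- the feature that makes $u^{-1}(K)$ a genuine interface crossed during every transition --- which I treat as built into the notion of interface.
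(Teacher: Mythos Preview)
Your approach coincides with the paper's: both use the Young-inequality transition cost $\Phi$, the localization \eqref{2points} from Proposition~\ref{prop:lower}, and a contradiction in which a violation of \eqref{tesilemma} produces a surplus $\Phi$-term pushing $E_\e[u]$ above $Nc_p+\Gamma$. The paper compresses the argument into a single contradiction without separating the two Hausdorff suprema; your explicit split, the IVT step for the second supremum, and your remark that this step needs $K\cap(-1,1)\neq\emptyset$ are useful clarifications (one small fix: define $\eta_K$ with $\min\{\Phi(1,k),\Phi(-1,k)\}$, since the ambient well near $x_0$ need not be $+1$).
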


\begin{proof}
%The proof can be done by following \cite{MMAS19,DCDS,Grant}, and we recall it here for the reader's convenience.
Let us fix $\delta\in(0,r)$ and choose $\rho>0$ small enough that 
\begin{equation*}
	I_\rho:=(-1-\rho,-1+\rho)\cup(1-\rho,1+\rho)\subset\R\backslash K, 
\end{equation*}
and 
\begin{equation*}
	\inf\left\{\left(\frac{p}{p-1}\right)^{\frac{p-1}{p}}\left|\int_{\xi_1}^{\xi_2} {F(s)}^\frac{p-1}{p}\, ds\right| : \xi_1\in K, \xi_2\in I_\rho\right\}>2M,
\end{equation*}
where
\begin{equation*}
	M:=2N \left(\frac{p}{p-1}\right)^{\frac{p-1}{p}}\max\left\{\int_{1-\rho}^{1}{F(s)}^\frac{p-1}{p}\\,ds, \, \int_{-1}^{-1+\rho}{F(s)}^\frac{p-1}{p}\\,ds \right\}.
\end{equation*}
By reasoning as in the proof of \eqref{2points} in Proposition \ref{prop:lower}, we can prove that for each $i$ there exist
\begin{equation*}
	x^-_{i}\in(h_i-\delta/2,h_i) \qquad \textrm{and} \qquad x^+_{i}\in(h_i,h_i+\delta/2),
\end{equation*}
such that
\begin{equation*}
	|u(x^-_{i})-v(x^-_{i})|<\rho \qquad \textrm{and} \qquad |u(x^+_{i})-v(x^+_{i})|<\rho.
\end{equation*}
	Suppose by absurd  that \eqref{tesilemma} is violated, and let's show that this leads to a contradiction. By Young's inequality we deduce
\begin{align}
	{E}_\varepsilon[u]&\geq\left(\frac{p}{p-1}\right)^{\frac{p-1}{p}}\sum_{i=1}^N\left|\int_{u(x^-_{i})}^{u(x^+_{i})}{F(s)}^\frac{p-1}{p}\,ds\right|\notag\\ 
	&  \quad+\left(\frac{p}{p-1}\right)^{\frac{p-1}{p}}\inf\left\{\left|\int_{\xi_1}^{\xi_2}{F(s)}^\frac{p-1}{p}\,ds\right| : \xi_1\in K, \xi_2\in I_\rho\right\}. \label{diseq:E1}
\end{align}
On the other hand, we have
\begin{align*}
	\left(\frac{p}{p-1}\right)^{\frac{p-1}{p}}\left|\int_{u(x^-_{i})}^{u(x^+_{i})}{F(s)}^\frac{p-1}{p}\,ds\right|&\geq\left(\frac{p}{p-1}\right)^{\frac{p-1}{p}}\int_{-1}^{1}{F(s)}^\frac{p-1}{p}\,ds\\			&\qquad-\left(\frac{p}{p-1}\right)^{\frac{p-1}{p}}\int_{-1}^{-1+\rho}{F(s)}^\frac{p-1}{p}\,ds\\
	&\qquad -\left(\frac{p}{p-1}\right)^{\frac{p-1}{p}}\int_{1-\rho}^{1}{F(s)}^\frac{p-1}{p}\,ds\\
	&\geq c_p-\frac{M}{N}. 
\end{align*}
Substituting the latter bound in \eqref{diseq:E1}, we deduce
\begin{equation*}
	E_\varepsilon[u]\geq N c_p-M+\left(\frac{p}{p-1}\right)^{\frac{p-1}{p}}\inf\left\{\left|\int_{\xi_1}^{\xi_2}{F(s)}^\frac{p-1}{p}\,ds\right| : \xi_1\in K, \xi_2\in I_\rho\right\},
\end{equation*}
that leads, given the choice of $\rho$,  to	
\begin{align*}
	E_\varepsilon[u]>Nc_p+M,
\end{align*}
which is a contradiction with assumption \eqref{ipolemma}. 
\end{proof}

We are now ready to prove the following result concerning the slow motion of the transition points $h_1, \dots , h_N$, showing that  
they evolve exponentially or algebraically slowly if $\theta=p$ or $\theta>p$, respectively.

\begin{thm}\label{thm:main3}
Let $F$ be as in \eqref{eq:F}, $D\in C^1$ strictly positive and let $u^\e$ be the solution to  \eqref{eq:CH-model}-\eqref{eq:initial}-\eqref{eq:Neu-p} 
with initial datum $u^\e_0$ satisfying $\displaystyle{\lim_{\varepsilon\rightarrow 0} \|u^\varepsilon_0-v\|_{{}_{L^1}}=0}$ 
and \eqref{eq:energy-ini} in the case $\theta=p$ or \eqref{eq:energy-ini-deg} in the case $\theta>p$. 
Given $\delta \in (0,r)$, set
\begin{equation*}
	t_\e(\delta) = \inf \{ t  :  d(I_K[u_\e(\cdot, t)], I_K[u_0^\e]) > \delta \},
\end{equation*}
where $K \subset \mathbb{R} \setminus \{ \pm 1\}$. Then there exists $\e_0>0$ such that, if $\e \in (0,\e_0)$
\begin{equation*}
	t_\e(\delta) > \omega(\e),
\end{equation*}
where
\begin{equation*}
	\omega(\e):= \left\{ \begin{aligned}
	&\exp (Ap/2\e) \qquad &\mbox{if} \quad \theta=p, \\
	&\e^{-k_m},  \qquad &\mbox{if} \quad \theta>p,
	\end{aligned}\right.
\end{equation*}
with $A$ and $k_m$ appearing in Theorems \ref{thm:main} and \ref{thm:main2}.
\end{thm}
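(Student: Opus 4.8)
The plan is to treat Theorem \ref{thm:main3} as a direct corollary obtained by feeding the slow-motion estimates of Theorems \ref{thm:main} and \ref{thm:main2} into the purely variational Lemma \ref{lemma:layermotion}. The idea is that Lemma \ref{lemma:layermotion} converts smallness of $\|\tilde u-\tilde v\|_{{}_{L^1}}$ together with an energy bound into closeness of the interface $I_K[u]$ to the fixed set $I[v]$; since the solution keeps $\tilde u^\e(\cdot,t)$ close to $\tilde v$ and keeps its energy controlled over the whole interval $[0,\omega(\e)]$, the interface cannot drift away from $I[v]$, and hence from its initial position, on that time scale. First I would fix $\delta\in(0,r)$ and apply Lemma \ref{lemma:layermotion} to this $\delta$, obtaining the associated constants $\hat\delta,\Gamma>0$; it then remains to check that the two hypotheses in \eqref{ipolemma} hold, uniformly in $t$, along $u^\e(\cdot,t)$.

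For the energy hypothesis I would invoke Lemma \ref{lem:energy-dec}: the functional $t\mapsto E_\e[u^\e](t)$ is non-increasing, so the initial bound \eqref{eq:energy-ini} (if $\theta=p$) or \eqref{eq:energy-ini-deg} (if $\theta>p$) gives $E_\e[u^\e](t)\le E_\e[u^\e_0]\le Nc_p+C\exp(-Ap/2\e)$, respectively $\le Nc_p+C\e^{k_m}$, for every $t\ge0$. Since the excess term tends to $0$ as $\e\to0^+$ in both regimes, it is $\le\Gamma$ for $\e$ small, so the energy condition in \eqref{ipolemma} holds for all $t\ge0$. For the $L^1$ hypothesis I would apply Theorem \ref{thm:main} (critical) or Theorem \ref{thm:main2} (supercritical) with a fixed constant $m>1$ (resp.\ $l>1$): estimate \eqref{eq:limit}, respectively \eqref{eq:limit-deg}, yields
$$\sup_{0\le t\le m\,\omega(\e)}\|\tilde u^\e(\cdot,t)-\tilde v\|_{{}_{L^1}}\xrightarrow[\e\to0]{}0,$$
so this supremum is $<\hat\delta$ once $\e$ is small. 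Both hypotheses of Lemma \ref{lemma:layermotion} are thus met at every $t\in[0,m\,\omega(\e)]$, and the lemma gives $d(I_K[u^\e(\cdot,t)],I[v])\le\delta/2$ on that entire interval. In particular, taking $t=0$ (and recalling from \eqref{1/2delta-int} that $\|\tilde u^\e_0-\tilde v\|_{{}_{L^1}}\le(b-a)\|u^\e_0-v\|_{{}_{L^1}}\to0$), the same bound holds for $u_0^\e$.

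Finally I would use that the set-distance $d$ introduced before the lemma is the Hausdorff distance and so satisfies the triangle inequality; hence, for every $t\in[0,m\,\omega(\e)]$,
$$d(I_K[u^\e(\cdot,t)],I_K[u_0^\e])\le d(I_K[u^\e(\cdot,t)],I[v])+d(I[v],I_K[u_0^\e])\le\frac\delta2+\frac\delta2=\delta.$$
Consequently the set $\{t:d(I_K[u^\e(\cdot,t)],I_K[u_0^\e])>\delta\}$ meets no point of $[0,m\,\omega(\e)]$, so its infimum satisfies $t_\e(\delta)\ge m\,\omega(\e)>\omega(\e)$, which is precisely the assertion; choosing $m>1$ (or $l>1$) is exactly what produces the strict inequality. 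The step I expect to be the main obstacle is the simultaneous, uniform-in-$t$ verification of the two hypotheses of Lemma \ref{lemma:layermotion} over an interval whose length grows exponentially (or algebraically) in $1/\e$: this is where the energy monotonicity of Lemma \ref{lem:energy-dec} and the uniform smallness of $\|\tilde u^\e(\cdot,t)-\tilde v\|_{{}_{L^1}}$ supplied by Theorems \ref{thm:main}--\ref{thm:main2} must be combined, and it is the only point where the specific time scale $\omega(\e)$ enters.
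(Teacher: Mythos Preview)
Your proposal is correct and follows essentially the same route as the paper's own proof: verify the two hypotheses of Lemma~\ref{lemma:layermotion} uniformly on $[0,\omega(\e)]$ using energy monotonicity (Lemma~\ref{lem:energy-dec}) together with \eqref{eq:limit} or \eqref{eq:limit-deg}, apply the lemma at $t=0$ and at generic $t$, and conclude via the triangle inequality for the Hausdorff distance. Your write-up is in fact slightly more explicit than the paper's (you spell out the role of $m>1$ for the strict inequality and the passage from $\|u_0^\e-v\|_{L^1}\to0$ to $\|\tilde u_0^\e-\tilde v\|_{L^1}\to0$), but the argument is the same.
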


\begin{proof}
We start with the case $\theta=p$. 
We choose $\e_0$ small enough such that the assumption on $u_0^\e$ implies that \eqref{ipolemma} is satisfied; 
hence, from Lemma \ref{lemma:layermotion} it follows that
\begin{equation*}
	d(I_K[u_0^\e],I[v]) < \frac{\delta}{2}.
\end{equation*}
Also, if considering the time dependent solution $u^\e(\cdot, t)$, from \eqref{eq:limit} in Theorem \ref{thm:main} and since $E_\e[u]$ is a non-increasing function of $t$, 
it follows that \eqref{ipolemma} is satisfied for $u^\e(\cdot, t)$, for any $t < \exp (Ap/2\e)$, implying \eqref{tesilemma} holds for $u^\e$ as well. 
As a consequence, from the triangular inequality, we have
\begin{equation*}
	d(I_K[u^\e(t)],I_K[u_0^\e]) <\delta,
\end{equation*}
for all $t \in (0,\exp(Ap/2\e))$. 
	
When $\theta>p$ we can proceed with the exact same computations by making use of \eqref{eq:limit-deg} in Theorem \ref{thm:main2}; 
we thus end up with 
\begin{equation*}
	d(I_K[u^\e(t)],I_K[u_0^\e]) <\delta,
\end{equation*}
for all $t \in (0,\e^{-k_m})$, and the proof is complete.
\end{proof}

Theorem \ref{thm:main3}, together with Theorems \ref{thm:main} and \ref{thm:main2}, prove that solutions to \eqref{eq:CH-model}-\eqref{eq:Neu-p} with a transition layer structure evolve {\it exponentially slowly} in the case $\theta=p$ and {\it algebraically slowly} if $\theta>p$; they indeed maintain the same profile of their initial datum for times of  
$\mathcal O(\exp(Ap/2\e))$ and $\mathcal O(\e^{-k_m})$ respectively, and the transition points move with exponentially (algebraically respectively) small speed.

\end{document}